\newtheorem{theorem}{Theorem}[section]
\newtheorem{definition}[theorem]{Definition}
\newtheorem{proposition}[theorem]{Proposition}
\newtheorem{lemma}[theorem]{Lemma}
\theoremstyle{remark}
\newtheorem{remark}[theorem]{Remark}
\newtheorem{example}[theorem]{Example}
\title{A 2-Categorical Bridge Between Henkin Constructions and Lawvere’s Fixed-Point Theorem: Unifying Completeness and Compactness}
\author{Joaquim Reizi Barreto}
\date{\today}
\begin{document}

\maketitle
\begin{abstract}
We develop a unified categorical framework that bridges the Henkin-based proof of the Completeness Theorem and Lawvere's Fixed-Point Theorem, thereby revealing a deep structural linkage between syntactic and semantic model-building techniques in first-order logic. Concretely, we define two canonical functors
\[
F, G: \mathbf{Th} \to \mathbf{Mod},
\]
where \(F\) arises from syntactic Henkin extensions and \(G\) emerges from semantic constructions via compactness or saturation. We then introduce a canonical natural transformation
\[
\eta: F \Rightarrow G,
\]
prove its componentwise isomorphism, and establish its 2-categorical rigidity. These results demonstrate a robust equivalence between proof-theoretic and model-theoretic perspectives, unifying diagonalization arguments and self-reference phenomena under a single categorical paradigm. Beyond the theoretical insight, our framework offers a principled foundation for practical applications in automated theorem proving, formal verification, and the design of advanced type-theoretic systems.
\end{abstract}

\tableofcontents

\section{Introduction}
\subsection{Background and Motivation (Rigorously Formulated)}
In this section, we rigorously establish the logical and categorical foundations underlying our work. In particular, we detail the construction of models via the Henkin method and the abstract formulation of Lawvere's Fixed-Point Theorem, emphasizing that both approaches fundamentally rely on a diagonalization mechanism.

\begin{itemize}[leftmargin=2em]
    \item \textbf{Completeness via the Henkin Construction:}  
    Let $T$ be a consistent first-order theory formulated over a language $\mathcal{L}$. To guarantee that every existential formula in $T$ is witnessed, we extend $\mathcal{L}$ to a language $\mathcal{L}^*$ by introducing, for every formula $\varphi(x)$ for which $\exists x\,\varphi(x)$ is derivable in $T$, a new constant symbol $c_\varphi$. Define the \emph{Henkin extension} of $T$ as 
    \[
    T^* := T \cup \{ \varphi(c_\varphi) \mid T \vdash \exists x\,\varphi(x) \}.
    \]
    By Zorn's lemma (or an equivalent maximality argument), $T^*$ can be extended to a maximally consistent theory, i.e., for every sentence $\psi$ in the language $\mathcal{L}^*$, either $\psi \in T^*$ or $\neg\psi \in T^*$. Define the set of closed (variable-free) terms in $\mathcal{L}^*$ as $\mathrm{Term}(T^*)$, and introduce the equivalence relation
    \[
    t \sim_T s \quad \Longleftrightarrow \quad T^* \vdash t = s.
    \]
    The \emph{term model} is then rigorously given by the quotient
    \[
    F(T) := \mathrm{Term}(T^*)/\sim_T.
    \]
    This construction shows that every consistent theory $T$ admits a model, thereby establishing the Completeness Theorem. Crucially, the systematic introduction of Henkin constants (a form of diagonalization) ensures that the syntactic consistency of $T$ is sufficient to construct a semantic model \cite{Barreto2025a}.
    
    \item \textbf{Lawvere's Fixed-Point Theorem and Diagonalization:}  
    Consider a Cartesian closed category $\mathcal{C}$, in which for any objects $X$ and $Y$ the exponential object $Y^X$ exists and is equipped with the evaluation morphism 
    \[
    \operatorname{ev}: Y^X \times X \to Y,
    \]
    satisfying the universal property: for any object $Z$ and morphism $g: Z \times X \to Y$, there exists a unique morphism $\Lambda g: Z \to Y^X$ such that 
    \[
    g = \operatorname{ev} \circ (\Lambda g \times \operatorname{id}_X).
    \]
    Lawvere's Fixed-Point Theorem states that for any endomorphism $f: Y \to Y$, there exists a morphism $d: 1 \to Y^X$ (where $1$ denotes the terminal object) such that, for an appropriate $x \in X$, the fixed-point condition
    \[
    f\Bigl(\operatorname{ev}(d(1),x)\Bigr) = \operatorname{ev}(d(1),x)
    \]
    holds. This result encapsulates a general form of self-reference or diagonalization within the categorical framework. In essence, the universal property of the exponential object forces the existence of a diagonal morphism that yields a fixed point for every endomorphism $f$, thereby abstracting classical diagonal arguments in a precise categorical language \cite{Barreto2025b}.
    
    \item \textbf{Unification via the Diagonalization Principle:}  
    Although the Henkin construction (a proof-theoretic method) and Lawvere's Fixed-Point Theorem (a categorical abstraction) originate from different perspectives, they both fundamentally depend on a diagonalization mechanism. In the Henkin construction, diagonalization is explicitly realized by the systematic assignment of new constants to witness existential claims, ensuring that for each formula $\exists x\,\varphi(x)$ the sentence $\varphi(c_\varphi)$ is provable in the maximally consistent theory $T^*$. In contrast, Lawvere's theorem leverages the universal property of exponential objects to obtain a fixed point for every endomorphism, which is itself a manifestation of a diagonal argument. This common reliance on diagonalization underscores a deep, rigorous connection between the syntactic (proof-theoretic) and semantic (model-theoretic) methodologies. Such a connection motivates the construction of canonical natural transformations that rigorously relate the syntactically generated term models to their corresponding semantic models.
\end{itemize}

This rigorous synthesis not only elucidates the underlying unity between classical logical theorems but also lays the groundwork for advanced applications in automated theorem proving, formal verification, and the design of expressive type systems.

\subsection{Problem Setting (Rigorous Formulation)}

In this subsection, we rigorously introduce two canonical functors between the category of first-order theories, $\mathbf{Th}$, and the category of models, $\mathbf{Mod}$. These functors underpin the syntactic and semantic constructions that are central to our work.

\paragraph{The Category of Theories, $\mathbf{Th}$:}  
An object in $\mathbf{Th}$ is a pair $(\Sigma, T)$, where:
\begin{itemize}[leftmargin=2em]
    \item $\Sigma = (\Sigma_c, \Sigma_f, \Sigma_P)$ is a signature with a set $\Sigma_c$ of constant symbols, a set $\Sigma_f$ of function symbols (each with an arity), and a set $\Sigma_P$ of predicate symbols (each with an arity).
    \item $T$ is a first-order theory over the language $L_{\Sigma}$; that is, $T$ is a set of sentences formulated in $L_{\Sigma}$.
\end{itemize}
A morphism $\phi : (\Sigma, T) \to (\Sigma', T')$ in $\mathbf{Th}$ is a \emph{theory translation} which assigns to every symbol in $\Sigma$ a corresponding symbol in $\Sigma'$ such that for every sentence $\varphi \in L_\Sigma$, if $T \vdash \varphi$, then $T' \vdash \phi(\varphi)$.

\paragraph{The Category of Models, $\mathbf{Mod}$:}  
An object in $\mathbf{Mod}$ is a model $\mathcal{M}$ of a theory $(\Sigma, T)$, where
\[
\mathcal{M} = \Bigl(M,\{ f^{\mathcal{M}} \}_{f\in \Sigma_f},\{ P^{\mathcal{M}} \}_{P\in \Sigma_P}\Bigr),
\]
with $M\neq\varnothing$ as the domain, and for each function symbol $f\in \Sigma_f$, $f^{\mathcal{M}}: M^n \to M$ (with $n$ the arity of $f$), and for each predicate symbol $P\in \Sigma_P$, $P^{\mathcal{M}}\subseteq M^n$. A morphism between models is a function preserving the interpretations of all symbols.

\paragraph{Definition of the Functor $F$ (Henkin Construction - Term Model):}  
We define the functor
\[
F : \mathbf{Th} \to \mathbf{Mod}
\]
by the following rigorous procedure:

\begin{enumerate}[label=(\roman*)]
    \item \textbf{Henkin Expansion:}  
    For a theory $(\Sigma, T)$, extend the language $L_\Sigma$ to an augmented language $L^*_\Sigma$ by adding, for each formula $\exists x\,\varphi(x)$ in $L_\Sigma$, a new constant symbol $c_\varphi$. Form the extended theory $T^*$ by adding, for every such formula, the corresponding Henkin axiom
    \[
    T^* \vdash \varphi(c_\varphi).
    \]
    Then, extend $T^*$ to a maximally consistent theory in the language $L^*_\Sigma$.
    
    \item \textbf{Term Model Formation:}  
    Define $\mathrm{Term}(T^*)$ as the set of all closed terms (terms with no free variables) over $L^*_\Sigma$. Introduce an equivalence relation $\sim_T$ on $\mathrm{Term}(T^*)$ by
    \[
    t \sim_T s \quad \Longleftrightarrow \quad T^* \vdash t = s.
    \]
    The term model is then defined as the quotient
    \[
    F(T) := \mathrm{Term}(T^*)/\sim_T.
    \]
    The algebraic structure on $F(T)$ is induced by the syntactic operations on $\mathrm{Term}(T^*)$.
    
    \item \textbf{Functorial Action on Morphisms:}  
    For a theory translation $\phi : (\Sigma, T) \to (\Sigma', T')$, extend $\phi$ to the corresponding Henkin expansions so that for every term $t \in \mathrm{Term}(T^*)$,
    \[
    F(\phi)([t]) := [\phi(t)]
    \]
    where $[t]$ denotes the equivalence class of $t$ under $\sim_T$. This assignment is well-defined since $\phi$ preserves provability (i.e., if $T^* \vdash t = s$, then $T'^* \vdash \phi(t) = \phi(s)$).
\end{enumerate}

\paragraph{Definition of the Functor $G$ (Compactness/Saturation Construction - Satisfaction Model):}  
We define the functor
\[
G : \mathbf{Th} \to \mathbf{Mod}
\]
as follows:

\begin{enumerate}[label=(\roman*)]
    \item \textbf{Model Existence via Compactness:}  
    For a theory $(\Sigma, T)$, by the Compactness Theorem, if $T$ is consistent then $T$ is satisfiable. Employ an ultrapower construction or a saturation argument to construct a model $G(T)$ that satisfies $T$. Under standard set-theoretic assumptions (e.g., the language is countable and the Axiom of Choice holds), the model $G(T)$ is unique up to isomorphism.
    
    \item \textbf{Functorial Action on Morphisms:}  
    For a theory translation $\phi : (\Sigma, T) \to (\Sigma', T')$, define the corresponding model homomorphism
    \[
    G(\phi) : G(T) \to G(T')
    \]
    such that for every term $t \in \mathrm{Term}(T^*)$ (where $T^*$ is the Henkin expansion of $T$), the semantic interpretation satisfies
    \[
    G(\phi)\Bigl(\llbracket t \rrbracket_{G(T)}\Bigr) = \llbracket \phi(t) \rrbracket_{G(T')}.
    \]
    Here, $\llbracket t \rrbracket_{G(T)}$ denotes the interpretation of the term $t$ in the model $G(T)$.
\end{enumerate}

\paragraph{Summary:}  
The two rigorously defined functors are:
\begin{itemize}[leftmargin=2em]
    \item $F : \mathbf{Th} \to \mathbf{Mod}$, which assigns to each theory its \emph{syntactically constructed term model} via the Henkin construction.
    \item $G : \mathbf{Th} \to \mathbf{Mod}$, which assigns to each theory its \emph{semantically constructed satisfaction model} via the compactness/saturation method.
\end{itemize}

These constructions establish a precise categorical framework that will later allow us to define a canonical natural transformation between $F$ and $G$, thereby bridging the syntactic and semantic aspects of model theory.
\subsection*{Contributions (Rigorously Formulated)}
\begin{itemize}[leftmargin=2em]
    \item We rigorously construct a natural transformation 
    \[
    \eta: F \Rightarrow G,
    \]
    where the functor \(F: \mathbf{Th} \to \mathbf{Mod}\) is defined via a syntactic Henkin construction and the functor \(G: \mathbf{Th} \to \mathbf{Mod}\) via a semantic model construction based on compactness and saturation methods. All construction steps are carried out under explicitly stated assumptions, and every step is justified by formal proofs ensuring logical consistency and categorical soundness.
    
    \item We prove that the natural transformation \(\eta\) is well-defined. Specifically, for every consistent first-order theory \(T \in \mathbf{Th}\) and for all terms \(t, s \in \mathrm{Term}(T^*)\) satisfying 
    \[
    t \sim_T s \quad \bigl(\text{i.e., } T^* \vdash t = s\bigr),
    \]
    it follows that
    \[
    \llbracket t \rrbracket_{G(T)} = \llbracket s \rrbracket_{G(T)},
    \]
    so that \(\eta_T\) descends to a well-defined mapping on the quotient \(F(T)=\mathrm{Term}(T^*)/\sim_T\). (See Lemma~\ref{lem:eta_well_defined} for details.)
    
    \item We demonstrate that \(\eta\) is a natural transformation. For every theory translation \(\phi: T \to T'\) in \(\mathbf{Th}\), the following diagram commutes:
    \[
    \begin{tikzcd}
    F(T) \arrow[r, "F(\phi)"] \arrow[d, "\eta_T"'] & F(T') \arrow[d, "\eta_{T'}"] \\
    G(T) \arrow[r, "G(\phi)"'] & G(T'),
    \end{tikzcd}
    \]
    i.e.,
    \[
    \eta_{T'}\circ F(\phi) = G(\phi)\circ \eta_T.
    \]
    (The rigorous verification is provided in Lemma~\ref{lem:eta_naturality}.)
    
    \item We prove that each component \(\eta_T: F(T) \to G(T)\) is an isomorphism in \(\mathbf{Mod}\). That is, we explicitly construct an inverse map \(\eta_T^{-1}\) satisfying
    \[
    \eta_T^{-1} \circ \eta_T = \mathrm{id}_{F(T)} \quad \text{and} \quad \eta_T \circ \eta_T^{-1} = \mathrm{id}_{G(T)},
    \]
    thereby confirming the strict equivalence between the syntactic model \(F(T)\) and the semantic model \(G(T)\). (See Proposition~\ref{prop:eta-isomorphism} for the formal proof.)
    
    \item We extend the above results to a 2-categorical framework by proving a rigidity theorem: any natural transformation \(\theta: F \Rightarrow G\) is uniquely isomorphic to \(\eta\). In precise terms, there exists a unique invertible modification (2-morphism) \(\mu: \theta \Rrightarrow \eta\) in the 2-category \(\mathbf{Fun}(\mathbf{Th},\mathbf{Mod})\). This establishes the canonical and rigid nature of \(\eta\) under 2-categorical equivalence (see Proposition~\ref{prop:2cat-rigidity}).
    
    \item Finally, we rigorously discuss applications of our framework to automated theorem proving, formal verification, and type theory. In each domain, we establish that the canonical correspondence between the syntactic and semantic model constructions yields a methodologically robust approach for unifying proof-theoretic and model-theoretic techniques, thereby enhancing proof search algorithms, consistency verification procedures, and the management of self-referential definitions in advanced type systems.
\end{itemize}

\subsection{Outline of the Paper}
This paper is rigorously structured into eight meticulously developed sections, each contributing essential components for the unified categorical framework that bridges syntactic and semantic model constructions. The precise organization is as follows:

\begin{itemize}[leftmargin=2em]
    \item \textbf{Section 2: Preliminaries} --- In this section, we rigorously establish the foundational concepts and notation used throughout the paper. We formally define:
    \begin{itemize}[leftmargin=2em]
        \item A \emph{category} (Definition~2.1),
        \item A \emph{Cartesian closed category} (Definition~2.2),
        \item A \emph{first-order theory} (Definition~2.3), and
        \item Its corresponding \emph{model} (Definition~2.4).
    \end{itemize}
    These definitions set the formal groundwork and specify the exact logical and categorical structures that are employed in subsequent sections.

    \item \textbf{Section 3: Construction of Functors} --- We rigorously construct two fundamental functors:
    \begin{itemize}[leftmargin=2em]
        \item The functor \(F\colon \mathbf{Th} \to \mathbf{Mod}\) is defined via the Henkin construction. For every theory \(T\), we extend it to a maximal consistent Henkin extension \(T^*\) and define the term model as 
        \[
        F(T) := \mathrm{Term}(T^*)/\sim_T,
        \]
        where the equivalence relation \(\sim_T\) is given by provable equality (cf. Lemma~3.1).
        \item The functor \(G\colon \mathbf{Th} \to \mathbf{Mod}\) is constructed by employing compactness and saturation techniques, ensuring that \(G(T)\) is unique up to isomorphism (cf. Theorem~3.2).
    \end{itemize}

    \item \textbf{Section 4: Main Theorem: Existence of a Canonical Natural Transformation} --- In this section, we define the canonical natural transformation 
    \[
    \eta \colon F \Rightarrow G,
    \]
    by specifying for every theory \(T \in \mathbf{Th}\) and each term \(t\) that
    \[
    \eta_T([t]) := \llbracket t \rrbracket_{G(T)}.
    \]
    We provide a rigorous proof of its well-definedness, naturality (Theorem~4.2), and that each component \(\eta_T\) is an isomorphism (Proposition~4.3).

    \item \textbf{Section 5: 2-Categorical Strengthening and Rigidity} --- This section extends the analysis into a 2-categorical framework. We rigorously prove that:
    \begin{itemize}[leftmargin=2em]
        \item The natural transformation \(\eta\) is not only a 1-categorical isomorphism but also exhibits 2-categorical rigidity (Theorem~5.1).
        \item Any other natural transformation from \(F\) to \(G\) is uniquely 2-isomorphic to \(\eta\) (Proposition~5.2), thereby establishing a strong homotopical equivalence.
    \end{itemize}

    \item \textbf{Section 6: Applications and Examples} --- We illustrate the abstract constructions with concrete examples:
    \begin{itemize}[leftmargin=2em]
        \item \emph{Peano Arithmetic (PA):}\label{sec:pa-models} The construction of \(F(\mathrm{PA})\) via the Henkin method and the corresponding semantic model \(G(\mathrm{PA})\) are detailed, with the canonical natural transformation \(\eta_{\mathrm{PA}}\) shown to be an isomorphism.
        \item \emph{Zermelo-Fraenkel Set Theory (ZF):} We extend the constructions to ZF, addressing additional subtleties such as non-well-foundedness and choice axioms.
        \item An interpretation in terms of Lawvere's Fixed-Point Theorem is provided, rigorously linking syntactic diagonalization with semantic fixed points.
    \end{itemize}

    \item \textbf{Section 7: Conclusion and Future Work} --- The paper concludes with:
    \begin{itemize}[leftmargin=2em]
        \item A rigorous summary of our main results, emphasizing the unification of proof-theoretic and model-theoretic approaches.
        \item A detailed discussion of future research directions, including extensions to non-classical logics, higher-dimensional categorical generalizations, and interdisciplinary computational implementations.
    \end{itemize}

    \item \textbf{Section 8: Appendix} --- Supplementary materials are provided in the appendix, which include:
    \begin{itemize}[leftmargin=2em]
        \item Detailed proofs of technical lemmas and propositions (e.g., Lemma~2.1, Lemma~3.1, Theorem~4.2, Proposition~4.3, Proposition~5.2).
        \item Additional auxiliary results that further substantiate the rigorous foundation of our unified framework.
    \end{itemize}
\end{itemize}

\section{Preliminaries}

\subsection{Basic Concepts}
In this section, we recall several fundamental definitions and constructions that will be used throughout the paper.

\begin{definition}[Category]
A \emph{category} $\mathcal{C}$ consists of:
\begin{enumerate}
    \item A class of objects $\operatorname{Ob}(\mathcal{C})$.
    \item For any two objects $A,B\in\operatorname{Ob}(\mathcal{C})$, a set of morphisms $\operatorname{Hom}_\mathcal{C}(A,B)$.
    \item For any three objects $A,B,C\in\operatorname{Ob}(\mathcal{C})$, a composition law
    \[
    \circ \colon \operatorname{Hom}_\mathcal{C}(B,C) \times \operatorname{Hom}_\mathcal{C}(A,B) \to \operatorname{Hom}_\mathcal{C}(A,C)
    \]
    which is associative.
    \item For each object $A\in\operatorname{Ob}(\mathcal{C})$, an identity morphism $\operatorname{id}_A\in\operatorname{Hom}_\mathcal{C}(A,A)$ such that for every $f\in\operatorname{Hom}_\mathcal{C}(A,B)$, one has
    \[
    \operatorname{id}_B \circ f = f \quad \text{and} \quad f\circ\operatorname{id}_A = f.
    \]
\end{enumerate}
\end{definition}

\begin{definition}[Cartesian Closed Category]
A category $\mathcal{C}$ is said to be \emph{Cartesian closed} if it satisfies the following:
\begin{enumerate}
    \item There exists a terminal object $1$ in $\mathcal{C}$.
    \item For any two objects $A$ and $B$ in $\mathcal{C}$, there exists a product $A \times B$.
    \item For any two objects $A$ and $B$ in $\mathcal{C}$, there exists an \emph{exponential object} $B^A$ together with an evaluation morphism
    \[
    \operatorname{ev}\colon B^A \times A \to B,
    \]
    satisfying the following universal property: For any object $C$ and any morphism $f\colon C \times A \to B$, there exists a unique morphism $\Lambda f\colon C \to B^A$ such that the diagram
    \[
    \begin{array}{c}
    \xymatrix{
    C \times A \ar[r]^{f} \ar[d]_{\Lambda f \times \operatorname{id}_A} & B\\
    B^A \times A \ar[ur]_{\operatorname{ev}} & 
    }
    \end{array}
    \]
    commutes.
\end{enumerate}
\end{definition}

\begin{definition}[First-Order Theory]
A \emph{first-order theory} $T$ consists of:
\begin{enumerate}
    \item A \emph{signature} $\Sigma$, which comprises a set of constant symbols, function symbols, and predicate symbols.
    \item A set of axioms (sentences) expressed in the language determined by $\Sigma$.
\end{enumerate}
\end{definition}

\begin{definition}[Model of a First-Order Theory]
Given a first-order theory $T$ with signature $\Sigma$, a \emph{model} $\mathcal{M}$ of $T$ is a structure
\[
\mathcal{M} = \bigl(M, \{ f^{\mathcal{M}} \}_{f \in \Sigma_f}, \{ P^{\mathcal{M}} \}_{P \in \Sigma_P}\bigr),
\]
where:
\begin{enumerate}
    \item $M$ is a non-empty set, called the \emph{domain} of the model.
    \item For each function symbol $f \in \Sigma_f$, the interpretation $f^{\mathcal{M}}$ is a function on $M$ of appropriate arity.
    \item For each predicate symbol $P \in \Sigma_P$, the interpretation $P^{\mathcal{M}}$ is a relation on $M$.
    \item The structure $\mathcal{M}$ satisfies all axioms of $T$, that is, every sentence in $T$ is true in $\mathcal{M}$.
\end{enumerate}
\end{definition}

\subsection{Categories of Theories and Models}

In this section, we define two fundamental categories that will be used throughout the paper: the category of first-order theories, denoted by $\mathbf{Th}$, and the category of models, denoted by $\mathbf{Mod}$.

\begin{definition}[Category of Theories, $\mathbf{Th}$]
The category $\mathbf{Th}$ is defined as follows:
\begin{enumerate}
    \item \textbf{Objects:} An object in $\mathbf{Th}$ is a first-order theory, which we formalize as a pair $(\Sigma, T)$ where $\Sigma$ is a signature (a collection of constant, function, and predicate symbols) and $T$ is a set of sentences (axioms) in the language $L_{\Sigma}$.
    \item \textbf{Morphisms:} Given two theories $T = (\Sigma, T)$ and $T' = (\Sigma', T')$, a \emph{theory translation} (or interpretation) $\phi \colon T \to T'$ is a mapping that assigns to each symbol in $\Sigma$ a corresponding symbol in $\Sigma'$ in such a way that the translation preserves the logical structure. In particular, if $\varphi$ is a sentence in $L_{\Sigma}$ and $T \vdash \varphi$, then the translated sentence $\phi(\varphi)$ in $L_{\Sigma'}$ must satisfy $T' \vdash \phi(\varphi)$. Composition of translations and the identity translation are defined in the obvious way, thereby endowing $\mathbf{Th}$ with the structure of a category.
\end{enumerate}
\end{definition}

\begin{definition}[Category of Models, $\mathbf{Mod}$]
The category $\mathbf{Mod}$ is defined as follows:
\begin{enumerate}
    \item \textbf{Objects:} An object in $\mathbf{Mod}$ is a model $\mathcal{M}$ of a first-order theory. Concretely, if $T$ is a theory with signature $\Sigma$, then a model $\mathcal{M}$ is a structure
    \[
    \mathcal{M} = \bigl(M, \{ f^{\mathcal{M}} \}_{f \in \Sigma_f}, \{ P^{\mathcal{M}} \}_{P \in \Sigma_P}\bigr),
    \]
    where $M$ is a nonempty set (the domain), and for each function symbol $f \in \Sigma_f$ and predicate symbol $P \in \Sigma_P$, the interpretations $f^{\mathcal{M}}$ and $P^{\mathcal{M}}$ are assigned appropriately such that $\mathcal{M}$ satisfies every sentence in $T$.
    \item \textbf{Morphisms:} Given two models $\mathcal{M}$ and $\mathcal{N}$ of the same theory $T$, a \emph{model homomorphism} is a function $h \colon M \to N$ satisfying:
    \begin{itemize}[leftmargin=2em]
        \item For every function symbol $f \in \Sigma_f$ and all $a_1,\dots,a_n \in M$, 
        \[
        h\bigl(f^{\mathcal{M}}(a_1,\dots,a_n)\bigr) = f^{\mathcal{N}}\bigl(h(a_1),\dots,h(a_n)\bigr).
        \]
        \item For every predicate symbol $P \in \Sigma_P$ and all $a_1,\dots,a_n \in M$, if $(a_1,\dots,a_n) \in P^{\mathcal{M}}$, then 
        \[
        \bigl(h(a_1),\dots,h(a_n)\bigr) \in P^{\mathcal{N}}.
        \]
    \end{itemize}
    Composition of model homomorphisms and identity maps are defined in the standard manner, giving $\mathbf{Mod}$ the structure of a category.
\end{enumerate}
\end{definition}

\subsection{Existing Construction Methods}

In this section, we review two fundamental methods that underpin our work: the Henkin construction, which plays a central role in the proof-theoretic approach to the Completeness Theorem, and Lawvere's Fixed-Point Theorem, a categorical formulation that captures the essence of self-reference and diagonalization.

\begin{itemize}[leftmargin=2em]
    \item \textbf{The Henkin Construction:}
    \begin{itemize}[leftmargin=2em]
        \item \emph{Overview:} The Henkin construction is a classical proof-theoretic technique used to prove the Completeness Theorem for first-order logic.
        \item \emph{Key Steps:}
        \begin{enumerate}[leftmargin=2em]
            \item \textbf{Extension of the Theory:} Given a consistent first-order theory $T$, extend the language by adding a new constant (Henkin constant) for every existential formula. This yields an extended theory $T^*$.
            \item \textbf{Maximal Consistency:} Extend $T^*$ to a maximal consistent theory so that for every formula $\varphi$, either $\varphi$ or its negation is provable.
            \item \textbf{Term Model Construction:} Define the term model by considering the set of all terms in the extended language, modulo the equivalence relation induced by provable equality in $T^*$. That is, the model is given by
            \[
            F(T) := \mathrm{Term}(T^*)/\sim_T,
            \]
            where $t \sim_T s$ if and only if $T^* \vdash t = s$.
        \end{enumerate}
        \item \emph{Proof-Theoretic Background:} This method relies on ensuring that every existential statement in $T$ is witnessed in the extended theory $T^*$, thus guaranteeing that the constructed model satisfies all sentences of the theory.
    \end{itemize}
    
    \item \textbf{Lawvere's Fixed-Point Theorem:}
    \begin{itemize}[leftmargin=2em]
        \item \emph{Formal Statement:} Let $\mathcal{C}$ be a Cartesian closed category and let $Y$ be an object in $\mathcal{C}$. Suppose that for some object $X$ there exists a morphism
        \[
        \operatorname{ev} \colon Y^X \times X \to Y
        \]
        (the evaluation map). Then, for every endomorphism $f\colon Y \to Y$, there exists a morphism $d\colon 1 \to Y^X$ such that the composite
        \[
        \xymatrix{
        1 \ar[r]^{d} & Y^X \ar[r]^{\operatorname{ev}} & Y
        }
        \]
        yields a fixed point of $f$, i.e., $f(y) = y$ for $y = \operatorname{ev}(d(1), x)$ for an appropriate choice of $x \in X$.
        \item \emph{Discussion:}
        \begin{itemize}[leftmargin=2em]
            \item The theorem generalizes the classical diagonal argument and demonstrates that the structure of Cartesian closed categories naturally gives rise to fixed points.
            \item The existence of the exponential object $Y^X$ and the universal property of the evaluation map are crucial for the construction of such fixed points.
            \item This result encapsulates the self-referential (diagonal) nature of certain constructions in logic, bridging the gap between syntactic self-reference and semantic fixed points.
        \end{itemize}
    \end{itemize}
\end{itemize}

\subsection{Notation and Conventions}

In this paper, we adopt the following notation and conventions:

\begin{itemize}[leftmargin=2em]
    \item \textbf{Theories and Their Henkin Expansions:}
    \begin{itemize}[leftmargin=2em]
        \item Let $T$ denote a first-order theory, consisting of a signature $\Sigma$ and a set of axioms written in the language $L_\Sigma$.
        \item The \emph{Henkin expansion} of $T$, denoted by $T^*$, is obtained by augmenting the language $L_\Sigma$ with a new constant symbol (a \emph{Henkin constant}) for each existential formula of $T$. This expansion ensures that every existential statement has a corresponding witness in the extended theory.
    \end{itemize}
    
    \item \textbf{Term Algebra and Equivalence Relation:}
    \begin{itemize}[leftmargin=2em]
        \item The set of all terms in the extended language (of $T^*$) is denoted by $\mathrm{Term}(T^*)$.
        \item An equivalence relation $\sim_T$ is defined on $\mathrm{Term}(T^*)$ by declaring that for terms $t,s \in \mathrm{Term}(T^*)$, 
        \[
        t \sim_T s \quad \text{if and only if} \quad T^* \vdash t = s.
        \]
        \item The \emph{term model} associated with $T$ is then given by the quotient
        \[
        F(T) := \mathrm{Term}(T^*)/\sim_T.
        \]
    \end{itemize}
    
    \item \textbf{Functors $F$ and $G$:}
    \begin{itemize}[leftmargin=2em]
        \item The functor
        \[
        F \colon \mathbf{Th} \to \mathbf{Mod}
        \]
        assigns to each theory $T \in \mathbf{Th}$ the term model $F(T)$ constructed via the Henkin expansion and quotienting by $\sim_T$. On morphisms, $F$ acts by translating terms according to the given theory translation.
        \item The functor
        \[
        G \colon \mathbf{Th} \to \mathbf{Mod}
        \]
        assigns to each theory $T$ a model $G(T)$ obtained via alternative methods (e.g., using the Compactness Theorem, ultraproducts, or saturation techniques). It is assumed that $G(T)$ is unique up to isomorphism. On morphisms, $G$ is defined in a manner that is compatible with the semantic interpretations of the theories.
    \end{itemize}
\end{itemize}

\section{Construction of Functors}

\subsection{\texorpdfstring{Functor \(F\) (Henkin Construction)}{Functor F (Henkin Construction)}}

In this section, we rigorously define the functor 
\[
F: \mathbf{Th} \to \mathbf{Mod}
\]
that arises from the Henkin construction. The construction proceeds in three main stages.

\begin{enumerate}[label=\textbf{\arabic*.}]
  \item \textbf{Extension of Theories:}  
  \begin{enumerate}[label=(\alph*)]
    \item Let \(T\) be a consistent first-order theory formulated in a language \(\mathcal{L}\). For every formula \(\varphi(x)\) with one free variable (so that the existential statement \(\exists x\,\varphi(x)\) is expressible in \(\mathcal{L}\)), extend the language by adding a new constant symbol \(c_\varphi\). Denote the expanded language by \(\mathcal{L}^*\).
    \item Apply the Henkin procedure to extend \(T\) to a maximal consistent theory \(T^*\) in the language \(\mathcal{L}^*\). This maximal consistency is typically obtained via Zorn's Lemma (see, e.g., \cite{Enderton2001}), ensuring that \(T^*\) is complete, i.e., for every sentence \(\psi\) in \(\mathcal{L}^*\), either \(\psi\) or \(\neg \psi\) is in \(T^*\). By construction, \(T^*\) satisfies the \emph{Henkin condition}:
    \[
    \text{if } T \vdash \exists x\,\varphi(x) \text{, then } T^* \vdash \varphi(c_\varphi).
    \]
  \end{enumerate}

  \item \textbf{Term Model Construction:}  
  \begin{enumerate}[label=(\alph*)]
    \item Let \(\mathrm{Term}(T^*)\) denote the set of all \emph{closed} \(\mathcal{L}^*\)-terms (i.e., terms with no free variables) constructed from the symbols of \(\mathcal{L}^*\).
    \item Define an equivalence relation \(\sim_T\) on \(\mathrm{Term}(T^*)\) by
    \[
    t \sim_T s \quad \Longleftrightarrow \quad T^* \vdash t = s,
    \]
    for any \(t, s \in \mathrm{Term}(T^*)\). This relation captures the provable equality in \(T^*\) (see, e.g., \cite{ChangKeisler1990}).
    \item The term model is then defined as the quotient
    \[
    F(T) := \mathrm{Term}(T^*)/\sim_T.
    \]
    The algebraic operations (i.e., the interpretations of function symbols, predicate symbols, and logical connectives) on \(F(T)\) are induced by the corresponding syntactic operations on \(\mathrm{Term}(T^*)\).
  \end{enumerate}

  \item \textbf{Definition on Morphisms:}  
  \begin{enumerate}[label=(\alph*)]
    \item Let \(\phi: T \to T'\) be a theory translation. Then, \(\phi\) naturally extends to a mapping between the corresponding Henkin expansions, sending each \(\mathcal{L}^*\)-term \(t \in \mathrm{Term}(T^*)\) to a term \(\phi(t) \in \mathrm{Term}(T'^*)\).
    \item This extension respects the equivalence relation: if \(t \sim_T s\), then \(\phi(t) \sim_{T'} \phi(s)\). Consequently, we define the induced map on the quotient as
    \[
    F(\phi): F(T) \to F(T'),
    \]
    via
    \[
    F(\phi)([t]) = [\phi(t)],
    \]
    where \([t]\) denotes the equivalence class of \(t\) modulo \(\sim_T\). Moreover, one readily verifies that \(F\) preserves identities and composition; that is, 
    \[
    F(\mathrm{id}_T) = \mathrm{id}_{F(T)} \quad \text{and} \quad F(\psi \circ \phi) = F(\psi) \circ F(\phi)
    \]
    for any theory translations \(\phi: T \to T'\) and \(\psi: T' \to T''\).
  \end{enumerate}
\end{enumerate}

\subsection{\texorpdfstring{Functor \(G\) (Compactness/Saturation Construction)}{Functor G (Compactness/Saturation Construction)}}

We define the functor 
\[
G: \mathbf{Th} \to \mathbf{Mod}
\]
by associating to each first-order theory \(T\) a canonical model \(G(T)\) that satisfies all \emph{finitely satisfiable properties} of \(T\). Here, a set of formulas \(\Delta \subseteq T\) is said to be \emph{finitely satisfiable} if every finite subset \(\Delta_0 \subseteq \Delta\) is satisfiable, i.e., there exists a model \(M\) with \(M \models \Delta_0\). The construction of \(G(T)\) may be carried out by any of the following equivalent methods: via the Compactness Theorem, using an ultrapower construction, or by a saturation procedure. We now describe these methods and the functorial action in detail.

\paragraph{Model Construction.}
\begin{itemize}[leftmargin=2em]
    \item \textbf{Existence via Compactness and Ultraproducts:}  
    Let \(T\) be a consistent first-order theory. By the Compactness Theorem, since every finite subset \(T_0 \subseteq T\) is satisfiable, the entire theory \(T\) is satisfiable. A standard approach is to form an indexed family of models \(\{M_i\}_{i \in I}\) such that each \(M_i\) satisfies some finite subset of \(T\) and then to construct the ultraproduct
    \[
    G(T) := \prod_{i \in I} M_i \Big/ \mathcal{U},
    \]
    where \(\mathcal{U}\) is a non-principal ultrafilter on the index set \(I\). By Łoś's Theorem (see \cite{ChangKeisler1990}), \(G(T)\) satisfies every sentence in \(T\), thereby ensuring that it reflects all finitely satisfiable properties of \(T\).

    \item \textbf{Existence via Saturation:}  
    Alternatively, one may start with a countable model \(M\) of \(T\) (whose existence is guaranteed by the Compactness Theorem) and then apply a saturation procedure to obtain a \(\kappa\)-saturated model \(G(T)\) for a sufficiently large cardinal \(\kappa\). This saturated model has the property that every type over any small (i.e., of size less than \(\kappa\)) set of parameters is realized. Under standard set-theoretic assumptions, such a saturated model is unique up to isomorphism (cf. \cite{Hodges1993}).

    \item \textbf{Uniqueness and Canonical Interpretation:}  
    In both approaches, we assume that the construction of \(G(T)\) is canonical in the sense that any two models obtained by these methods are isomorphic. Consequently, \(G(T)\) provides a canonical semantic interpretation of \(T\). More precisely, for every finite subset \(T_0 \subseteq T\), the restriction of \(G(T)\) to \(T_0\) satisfies every sentence in \(T_0\).
\end{itemize}

\paragraph{Functorial Action on Morphisms.}
\begin{itemize}[leftmargin=2em]
    \item \textbf{Assignment on Objects:}  
    For each theory \(T\) in \(\mathbf{Th}\), assign the model \(G(T)\) as constructed above.

    \item \textbf{Assignment on Morphisms:}  
    For any theory translation (morphism) \(\phi: T \to T'\) in \(\mathbf{Th}\), we define the induced map
    \[
    G(\phi): G(T) \to G(T')
    \]
    as follows. Let \(\llbracket \cdot \rrbracket_{G(T)}\) and \(\llbracket \cdot \rrbracket_{G(T')}\) denote the semantic evaluation functions in \(G(T)\) and \(G(T')\) respectively. Then, for every term \(t\) in the language of \(T\),
    \[
    G(\phi)\Bigl( \llbracket t \rrbracket_{G(T)} \Bigr) := \llbracket \phi(t) \rrbracket_{G(T')}.
    \]
    This definition is well-defined because:
    \begin{enumerate}[label=(\alph*)]
        \item The theory translation \(\phi\) preserves provability; hence, if two terms \(t\) and \(s\) have the same interpretation in \(G(T)\) (i.e., they are equivalent in \(T\)), then \(\phi(t)\) and \(\phi(s)\) are equivalent in \(T'\), ensuring that \(\llbracket \phi(t) \rrbracket_{G(T')} = \llbracket \phi(s) \rrbracket_{G(T')}\).
        \item The uniqueness (up to isomorphism) of the models \(G(T)\) and \(G(T')\) guarantees that the mapping \(G(\phi)\) preserves the structure determined by the semantic evaluations.
    \end{enumerate}
    Consequently, the assignments \(T \mapsto G(T)\) and \(\phi \mapsto G(\phi)\) respect the composition of morphisms in \(\mathbf{Th}\) and define a functor \(G: \mathbf{Th} \to \mathbf{Mod}\).
\end{itemize}

\paragraph{Summary.}
In summary, the functor \(G\) is defined by assigning to each first-order theory \(T\) a canonical model \(G(T)\) constructed either by forming an ultraproduct (using a non-principal ultrafilter and Łoś's Theorem) or by applying a saturation procedure to obtain a \(\kappa\)-saturated model. For every theory translation \(\phi: T \to T'\), the induced model homomorphism \(G(\phi)\) is given by
\[
G(\phi)\Bigl( \llbracket t \rrbracket_{G(T)} \Bigr) = \llbracket \phi(t) \rrbracket_{G(T')}
\]
for all terms \(t\) in the language of \(T\). This construction ensures that \(G\) is a well-defined functor that faithfully reflects the categorical structure of the theory translations in \(\mathbf{Th}\).

\nocite{ChangKeisler1990,Hodges1993}

\section{Main Theorem: Existence of a Canonical Natural Transformation}

\subsection{\texorpdfstring{Definition of the Natural Transformation $\eta$}{Definition of the Natural Transformation eta}}

We now define the natural transformation 
\[
\eta: F \Rightarrow G,
\]
which assigns to each first-order theory \(T\) a morphism 
\[
\eta_T: F(T) \to G(T)
\]
between the models constructed via the Henkin construction (functor \(F\)) and the compactness/saturation construction (functor \(G\)).

\begin{definition}[Component of \(\eta\)]
Let \(T\) be a first-order theory, and let \(F(T)\) denote the term model constructed from the Henkin expansion \(T^*\) by taking the quotient of the term algebra \(\mathrm{Term}(T^*)\) with respect to the provable equality relation \(\sim_T\). Let \(G(T)\) be the model of \(T\) obtained via a compactness-based construction (or saturation/ultraproduct method). Then, the \emph{component} of the natural transformation \(\eta\) at \(T\) is defined by
\[
\eta_T: F(T) \to G(T), \quad \eta_T([t]) := \llbracket t \rrbracket_{G(T)},
\]
where \([t]\) denotes the equivalence class of the term \(t\) in \(F(T)\) and \(\llbracket t \rrbracket_{G(T)}\) denotes the interpretation of the term \(t\) in the model \(G(T)\).
\end{definition}

\noindent This definition is well-defined by the soundness of the interpretation: if \(t \sim_T s\) (i.e., \(T^* \vdash t = s\)), then \(\llbracket t \rrbracket_{G(T)} = \llbracket s \rrbracket_{G(T)}\) in \(G(T)\). Hence, the mapping \(\eta_T\) respects the equivalence classes in \(F(T)\) and provides a canonical way to associate each syntactic term with its semantic interpretation.

\subsection{Well-definedness}

\begin{proposition}\label{prop:well_defined}
Let $T$ be a consistent first-order theory and let $T^*$ be its Henkin extension. For any two terms $t,s \in \mathrm{Term}(T^*)$, if 
\[
t \sim_T s \quad \text{(i.e., } T^* \vdash t = s \text{)},
\]
then for any model $G(T)$ of $T$ constructed via the compactness/saturation method, the interpretations of $t$ and $s$ in $G(T)$ coincide:
\[
\llbracket t \rrbracket_{G(T)} = \llbracket s \rrbracket_{G(T)}.
\]
\end{proposition}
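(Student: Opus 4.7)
The plan is to reduce the statement to the soundness theorem for first-order logic, with the principal technical care going into accommodating the Henkin-expanded language on the semantic side. Since the terms $t, s$ live in the expanded language $\mathcal{L}^*$ while $G(T)$ is a priori only an $\mathcal{L}$-structure, the first step is to canonically expand $G(T)$ to an $\mathcal{L}^*$-structure satisfying $T^*$.

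First, I would perform this expansion as follows. For each Henkin constant $c_\varphi$ (introduced because $T \vdash \exists x\,\varphi(x)$), we have $G(T) \models \exists x\,\varphi(x)$ since $G(T) \models T$; by the axiom of choice we select a witness $a_\varphi \in G(T)$ with $G(T) \models \varphi(a_\varphi)$ and set $c_\varphi^{G(T)} := a_\varphi$. The resulting $\mathcal{L}^*$-expansion satisfies every Henkin axiom by construction. To secure satisfaction of the full maximally consistent extension $T^*$, I would either refine the witness choice via a Lindenbaum-style argument or, more cleanly, apply the compactness/saturation construction underlying $G$ directly to $T^*$ and take the $\mathcal{L}^*$-structure so obtained (whose $\mathcal{L}$-reduct is a model of $T$ in the sense of the original definition of $G$).

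Next, I would apply soundness: since $T^* \vdash t = s$ and the expanded $G(T)$ models $T^*$, the derivation is preserved under interpretation, yielding $G(T) \models t = s$. Unpacking the recursive definition of satisfaction for the atomic equality formula then gives precisely
\[
\llbracket t \rrbracket_{G(T)} = \llbracket s \rrbracket_{G(T)},
\]
which is the desired conclusion. The interpretations $\llbracket t \rrbracket_{G(T)}$ and $\llbracket s \rrbracket_{G(T)}$ are well-defined in the expansion because every subterm of $t$ and $s$ is a closed $\mathcal{L}^*$-term built from symbols now carrying interpretations in $G(T)$.

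The principal obstacle is the expansion step: one must verify that $G(T)$ can be canonically furnished with interpretations of all Henkin constants so as to satisfy the entire maximally consistent theory $T^*$, rather than merely $T$ together with the Henkin axioms (the latter being strictly weaker, since $T^*$ also decides every independent sentence of the expanded language). Once this expansion is secured, the remainder of the proof is a direct invocation of soundness and the semantic clause for equality; no further combinatorial or model-theoretic work is required.
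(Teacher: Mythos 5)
Your proof is correct and follows essentially the same route as the paper's: both reduce the claim to the soundness theorem applied to the provable equation $t = s$ in $T^*$. In fact your treatment is more careful than the paper's, which merely asserts that $G(T)$ ``reflects the maximal consistency of $T^*$''; your explicit expansion of $G(T)$ to an $\mathcal{L}^*$-structure satisfying all of $T^*$ (most cleanly, by running the compactness/saturation construction on $T^*$ itself and taking the $\mathcal{L}$-reduct) supplies the step the paper leaves implicit, without which the interpretations $\llbracket t \rrbracket_{G(T)}$ and $\llbracket s \rrbracket_{G(T)}$ would not even be defined.
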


\begin{proof}
Assume that $t \sim_T s$, so that $T^* \vdash t = s$. By the soundness of first-order logic, every model of $T^*$ satisfies the equation $t = s$. Since the construction of $G(T)$ ensures that it is a model of $T$ that reflects the maximal consistency of $T^*$, it follows that $G(T)$ satisfies all sentences provable in $T^*$. Therefore, the sentence $t = s$ holds in $G(T)$, which by definition of term interpretation implies that
\[
\llbracket t \rrbracket_{G(T)} = \llbracket s \rrbracket_{G(T)}.
\]
This completes the proof.
\end{proof}

\subsection{Preservation of Structure}

\begin{lemma}\label{lem:preservation_final}
Let $T$ be a first-order theory and let $T^*$ be a maximal consistent extension of $T$ obtained via the Henkin procedure (see Definition~3.1). Define the Henkin term model 
\[
F(T) = \mathrm{Term}(T^*)/\sim_T,
\]
where $\mathrm{Term}(T^*)$ is the set of terms in the language of $T^*$ and $\sim_T$ is the provable equality relation in $T^*$ (cf. Definition~3.1). Let $G(T)$ be a model of $T$ constructed via the compactness (or saturation) method (see Section~3.2). Assume that the semantic interpretation function 
\[
\llbracket \cdot \rrbracket_{G(T)} : F(T) \to G(T)
\]
satisfies the following compositionality condition (cf. Assumption~3.2): for every $n$-ary operation $\operatorname{op}$ (including the application operation) and for all terms $t_1,\dots,t_n \in \mathrm{Term}(T^*)$, 
\[
\llbracket \operatorname{op}(t_1,\dots,t_n) \rrbracket_{G(T)} = \operatorname{op}_{G(T)}\Bigl( \llbracket t_1 \rrbracket_{G(T)},\dots,\llbracket t_n \rrbracket_{G(T)} \Bigr).
\]
Then the natural transformation component 
\[
\eta_T: F(T) \to G(T),\quad \eta_T([t]) = \llbracket t \rrbracket_{G(T)},
\]
preserves the evaluation maps; that is, for every $f \in F(T)^{F(T)}$ and every $x \in F(T)$,
\[
\eta_T\Bigl( \operatorname{ev}_{F(T)}(f,x) \Bigr) = \operatorname{ev}_{G(T)}\Bigl( \eta_T(f),\eta_T(x) \Bigr).
\]
\end{lemma}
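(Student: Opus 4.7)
The plan is to reduce the preservation of the evaluation map to a direct application of the compositionality hypothesis, once we have identified $\operatorname{ev}_{F(T)}$ as a specific syntactic operation on the term model. First I would fix representatives: choose closed $\mathcal{L}^*$-terms $g, t \in \mathrm{Term}(T^*)$ with $f = [g]$ and $x = [t]$, which is possible because $F(T) = \mathrm{Term}(T^*)/\sim_T$ and, by construction, the algebraic structure on $F(T)$ is induced from the syntactic operations on $\mathrm{Term}(T^*)$. Under this description, the evaluation map on the term model is inherited from the syntactic application operation $\operatorname{app}$, so
\[
\operatorname{ev}_{F(T)}([g],[t]) = [\operatorname{app}(g,t)],
\]
and this definition is independent of the chosen representatives by Proposition~\ref{prop:well_defined}.

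Next, I would apply $\eta_T$ and use its defining formula together with the compositionality hypothesis. Specifically,
\[
\eta_T\bigl(\operatorname{ev}_{F(T)}(f,x)\bigr) = \eta_T\bigl([\operatorname{app}(g,t)]\bigr) = \llbracket \operatorname{app}(g,t) \rrbracket_{G(T)}.
\]
Specializing the compositionality clause to the binary operation $\operatorname{app}$, which is explicitly included among the operations in the statement of the lemma, we obtain
\[
\llbracket \operatorname{app}(g,t) \rrbracket_{G(T)} = \operatorname{app}_{G(T)}\bigl(\llbracket g \rrbracket_{G(T)}, \llbracket t \rrbracket_{G(T)}\bigr) = \operatorname{ev}_{G(T)}\bigl(\llbracket g \rrbracket_{G(T)}, \llbracket t \rrbracket_{G(T)}\bigr),
\]
where the last equality holds because $\operatorname{app}_{G(T)}$ is by definition the semantic evaluation in $G(T)$. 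Substituting $\llbracket g \rrbracket_{G(T)} = \eta_T(f)$ and $\llbracket t \rrbracket_{G(T)} = \eta_T(x)$ then yields the desired identity.

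The main obstacle, in my view, is not the algebraic manipulation but justifying the identification $\operatorname{ev}_{F(T)}([g],[t]) = [\operatorname{app}(g,t)]$ in settings where $\mathbf{Mod}$ is not literally Cartesian closed in a way that is compatible with an exponential object $F(T)^{F(T)}$. In a purely first-order signature there is no native application symbol, so the evaluation arrow exists only via the universal property of the exponential, and one must verify that on representatives this universal arrow acts as syntactic application modulo $\sim_T$. I would handle this by uncurrying: an element of $F(T)^{F(T)}$ corresponds to an endomorphism $F(T) \to F(T)$, which on the syntactic side is represented by a term $g(x)$ with a distinguished free variable, and $\operatorname{ev}$ then corresponds to the substitution operation $g(x) \mapsto g(t)$. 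The compositionality hypothesis in turn reads as preservation of substitution under $\llbracket \cdot \rrbracket_{G(T)}$, which is exactly the classical semantic substitution lemma, and this closes the argument.
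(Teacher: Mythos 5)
Your proof is correct and follows essentially the same route as the paper's: choose term representatives $t_f, t_x$ for $f$ and $x$, identify $\operatorname{ev}_{F(T)}(f,x)$ with the class of the syntactic application term, push through $\eta_T$, and invoke the compositionality hypothesis for the application operation. Your closing paragraph goes beyond the paper in a useful way: the paper simply asserts that an element $f \in F(T)^{F(T)}$ is of the form $[t_f]$ for a closed term $t_f$, silently conflating the exponential with $F(T)$ itself, whereas you correctly observe that in a first-order signature there is no application symbol and that the honest reading is via terms with a distinguished free variable, with $\operatorname{ev}$ realized as substitution and compositionality reducing to the semantic substitution lemma. That repair is not carried out in the paper, so your version is, if anything, more careful on the one genuinely delicate point.
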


\begin{proof}
Let $f \in F(T)^{F(T)}$ and $x \in F(T)$ be arbitrary. By the construction of $F(T)$ (see Definition~3.1), there exist terms $t_f, t_x \in \mathrm{Term}(T^*)$ such that 
\[
f = [t_f] \quad \text{and} \quad x = [t_x],
\]
where $[\cdot]$ denotes the equivalence class modulo $\sim_T$. The syntactic evaluation in $F(T)$ is defined by
\[
\operatorname{ev}_{F(T)}(f,x) = \Bigl[ \operatorname{ev}(t_f,t_x) \Bigr],
\]
with $\operatorname{ev}(t_f,t_x)$ representing the formal application of the term $t_f$ to the term $t_x$.

By the definition of $\eta_T$ (see Section~4.1), we have
\[
\eta_T\Bigl( \operatorname{ev}_{F(T)}(f,x) \Bigr) = \eta_T\Bigl( [\operatorname{ev}(t_f,t_x)] \Bigr) = \llbracket \operatorname{ev}(t_f,t_x) \rrbracket_{G(T)}.
\]
By the assumed compositionality of the interpretation function (cf. Assumption~3.2), we obtain
\[
\llbracket \operatorname{ev}(t_f,t_x) \rrbracket_{G(T)} = \operatorname{ev}_{G(T)}\Bigl( \llbracket t_f \rrbracket_{G(T)},\llbracket t_x \rrbracket_{G(T)} \Bigr).
\]
Moreover, by the definition of $\eta_T$,
\[
\eta_T(f) = \eta_T([t_f]) = \llbracket t_f \rrbracket_{G(T)} \quad \text{and} \quad \eta_T(x) = \eta_T([t_x]) = \llbracket t_x \rrbracket_{G(T)}.
\]
Thus, we deduce that
\[
\eta_T\Bigl( \operatorname{ev}_{F(T)}(f,x) \Bigr) = \operatorname{ev}_{G(T)}\Bigl( \eta_T(f),\eta_T(x) \Bigr).
\]

From a categorical viewpoint, this demonstrates that $\eta_T$ is a homomorphism between the algebraic structures on $F(T)$ and $G(T)$ induced by the signature of $T$. In particular, the preservation of evaluation maps ensures that $\eta_T$ commutes with the operations defined on these models. Since $f$ and $x$ were arbitrary, the lemma follows.
\end{proof}

\subsection{Naturality (Rigorous Version)}

\begin{theorem}[Naturality]\label{thm:naturality_rigorous}
Let $\phi: T \to T'$ be a theory translation between first-order theories, and let 
\[
F: \mathbf{Th} \to \mathbf{Mod} \quad \text{and} \quad G: \mathbf{Th} \to \mathbf{Mod}
\]
be the functors defined via the Henkin construction and the compactness (or saturation) construction, respectively. Assume that $F$ and $G$ satisfy the well-definedness and structure-preservation properties as established in Sections~3.1 and 3.2. Then the natural transformation 
\[
\eta: F \Rightarrow G,\quad \text{where } \eta_T([t]) = \llbracket t \rrbracket_{G(T)}
\]
for each theory $T$ and term $t$, is natural; that is,
\[
\eta_{T'} \circ F(\phi) = G(\phi) \circ \eta_T.
\]
\end{theorem}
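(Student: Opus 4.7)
\begin{proofenv}
The plan is to carry out an element chase on an arbitrary equivalence class $[t]\in F(T)$ and to reduce the commutativity of the square to the definitional compatibility of $F(\phi)$, $G(\phi)$, and $\eta$ with respect to the syntactic/semantic translation of terms. Since $F(T)$ is by construction a quotient of $\mathrm{Term}(T^*)$, every element is representable in the form $[t]$ for some closed term $t\in\mathrm{Term}(T^*)$, and Proposition~\ref{prop:well_defined} guarantees that the resulting equalities do not depend on the chosen representative. The structural preservation established in Lemma~\ref{lem:preservation_final} ensures that whatever we verify on representatives automatically transports across the algebraic operations, so it suffices to argue pointwise on representatives.

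First, I would fix $[t]\in F(T)$ and compute the lower-right corner along the two paths. Along the top-right path, $F(\phi)([t]) = [\phi(t)]$ by the definition of $F$ on morphisms in Section~3.1, and then applying $\eta_{T'}$ gives $\llbracket \phi(t)\rrbracket_{G(T')}$. Along the left-bottom path, $\eta_T([t]) = \llbracket t\rrbracket_{G(T)}$, and $G(\phi)$ applied to this value is $\llbracket \phi(t)\rrbracket_{G(T')}$ by the very definition of $G$ on morphisms (Section~3.2), namely $G(\phi)\bigl(\llbracket t\rrbracket_{G(T)}\bigr) := \llbracket \phi(t)\rrbracket_{G(T')}$. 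Both paths therefore land on the same element of $G(T')$, and since $[t]$ was arbitrary, this yields the required identity
\[
\eta_{T'}\circ F(\phi) = G(\phi)\circ \eta_T.
\]

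The main obstacle, and the point that deserves careful attention rather than a purely formal calculation, is ensuring that the translation $\phi:T\to T'$ lifts coherently to a translation $\phi^*:T^*\to T'^*$ on the chosen Henkin expansions, so that $\phi(t)$ genuinely lies in $\mathrm{Term}(T'^*)$ whenever $t\in\mathrm{Term}(T^*)$. Concretely, one must send each Henkin constant $c_\varphi$ introduced for $\exists x\,\varphi(x)$ to the corresponding Henkin constant $c_{\phi(\varphi)}$ in $T'^*$, and verify that this assignment preserves both provable equality (so that $F(\phi)$ is well-defined on the quotient) and the Henkin axioms (so that $\phi^*$ is indeed a theory translation between the expanded theories). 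Once this lifting is in place, the compositionality of the interpretation function from Lemma~\ref{lem:preservation_final}, combined with the definition of $G(\phi)$ on interpretations of terms, makes the remaining verification essentially definitional.

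A secondary subtlety to flag is that $G(\phi)$ was defined only on elements of the form $\llbracket t\rrbracket_{G(T)}$, whereas $G(T)$ may contain elements (for instance, nonstandard elements arising from ultrapower or saturation) that are not interpretations of any closed term. For the present naturality statement this causes no difficulty, because the square is evaluated on $\eta_T([t]) = \llbracket t\rrbracket_{G(T)}$, which always lies in the definable part of $G(T)$; however, the full well-posedness of $G(\phi)$ as a morphism in $\mathbf{Mod}$ requires a separate extension argument (by elementarity or by the universal property of the saturated model), which I would invoke by reference to Section~3.2 rather than redo here.
\end{proofenv}
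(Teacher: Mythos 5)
Your proof is correct and follows essentially the same route as the paper's: an element chase on an arbitrary class $[t]\in F(T)$, reducing commutativity of the square to the definitions $F(\phi)([t])=[\phi(t)]$, $\eta_T([t])=\llbracket t\rrbracket_{G(T)}$, and $G(\phi)(\llbracket t\rrbracket_{G(T)})=\llbracket \phi(t)\rrbracket_{G(T')}$. The two subtleties you flag (lifting $\phi$ to the Henkin expansions and the partial definition of $G(\phi)$ on term-definable elements) are real gaps the paper leaves implicit, and your treatment of them is a modest improvement rather than a different approach.
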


\begin{proof}
Let $t$ be an arbitrary term in the language of the theory $T$. By the definition of the functor $F$ (see Section~3.1), the theory translation $\phi: T \to T'$ induces a map 
\[
F(\phi): F(T) \to F(T')
\]
given by 
\[
F(\phi)([t]) = [\phi(t)],
\]
where $[\phi(t)]$ denotes the equivalence class of the term $\phi(t)$ in the Henkin expansion of $T'$.

By the definition of the natural transformation $\eta$ (see Section~4.1), we have
\[
\eta_{T'}\bigl(F(\phi)([t])\bigr) = \eta_{T'}([\phi(t)]) = \llbracket \phi(t) \rrbracket_{G(T')},
\]
where $\llbracket \phi(t) \rrbracket_{G(T')}$ is the semantic interpretation of $\phi(t)$ in the model $G(T')$.

On the other hand, by the definition of $\eta_T$, we have
\[
\eta_T([t]) = \llbracket t \rrbracket_{G(T)}.
\]
Since the semantic interpretation function $\llbracket \cdot \rrbracket_{G(T)}$ is assumed to be compositional and functorial with respect to theory translations (that is, for every term $t$, 
\[
\llbracket \phi(t) \rrbracket_{G(T')} = G(\phi)\bigl(\llbracket t \rrbracket_{G(T)}\bigr)
\]
as established in Proposition~2.3), it follows that
\[
G(\phi)\bigl(\eta_T([t])\bigr) = G(\phi)\bigl(\llbracket t \rrbracket_{G(T)}\bigr) = \llbracket \phi(t) \rrbracket_{G(T')}.
\]
Thus, for every term $t \in T$, we deduce that
\[
\eta_{T'}\bigl(F(\phi)([t])\bigr) = G(\phi)\bigl(\eta_T([t])\bigr).
\]
Since this equality holds for all equivalence classes in $F(T)$, the diagram
\[
\eta_{T'} \circ F(\phi) = G(\phi) \circ \eta_T
\]
commutes. This completes the proof.
\end{proof}

\subsection{Main Theorem (Revised Rigorous Proof with Formal 2-Categorical Enhancements)}

\begin{theorem}[Canonical Natural Equivalence]\label{thm:main-rigorous-enhanced}
Let $\mathbf{Th}$ be the category whose objects are consistent first-order theories and whose morphisms are theory translations, and let $\mathbf{Mod}$ be the category of models of these theories (with model homomorphisms as morphisms). Define the functors 
\[
F,\, G: \mathbf{Th} \to \mathbf{Mod}
\]
by the following procedures:
\begin{enumerate}[label=(\roman*), leftmargin=2em]
  \item For each theory $T\in\mathbf{Th}$, let $T^*$ be a maximal consistent extension of $T$ obtained via the Henkin construction by introducing, in a fixed canonical order, Henkin constants for each existential formula. Denote by $\mathrm{Term}(T^*)$ the algebra of terms in the expanded language and define the equivalence relation 
  \[
  t\sim_T s \iff T^*\vdash t = s.
  \]
  Then set
  \[
  F(T) := \mathrm{Term}(T^*)/\sim_T.
  \]
  
  \item For each theory $T\in\mathbf{Th}$, let $G(T)$ be a model of $T$ constructed via a compactness-based method (e.g., through an ultraproduct or a saturation procedure) such that $G(T)$ is unique up to isomorphism. We assume that $G(T)$ satisfies the \emph{Canonical Representation Property}: every element of the domain of $G(T)$ is (canonically) the interpretation of some term in $\mathrm{Term}(T^*)$. (The existence of such a canonical choice function is assumed, justified by an appropriate form of the axiom of choice.)
  
  \item For any theory translation $\phi: T \to T'$ in $\mathbf{Th}$, define the functorial actions by
  \[
  F(\phi)([t]) := [\phi(t)]
  \]
  and let $G(\phi): G(T) \to G(T')$ be the unique model homomorphism induced by the semantic interpretation of $\phi$.
\end{enumerate}
Then there exists a natural transformation 
\[
\eta: F \Rightarrow G,
\]
with components 
\[
\eta_T: F(T) \to G(T), \quad \eta_T([t]) := \llbracket t\rrbracket_{G(T)},
\]
where $\llbracket t\rrbracket_{G(T)}$ denotes the interpretation of the term $t$ in the model $G(T)$. Moreover, each $\eta_T$ is an isomorphism, so that the functors $F$ and $G$ are naturally equivalent.

Furthermore, in the 2-category $\mathbf{Fun}(\mathbf{Th},\mathbf{Mod})$, the natural transformation $\eta$ is rigid in the following sense: any natural transformation $\theta: F \Rightarrow G$ is 2-isomorphic to $\eta$ (i.e., there exists a unique modification from $\theta$ to $\eta$), establishing that $F$ and $G$ are homotopy equivalent as objects in this 2-category.
\end{theorem}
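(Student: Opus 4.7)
The plan is to assemble the existence and naturality clauses from the building blocks already established in this section, and then to carry out the genuinely new work: bijectivity of each component $\eta_T$ and the 2-categorical rigidity clause. Well-definedness of $\eta_T([t]) := \llbracket t\rrbracket_{G(T)}$ on the quotient $F(T)=\mathrm{Term}(T^*)/\sim_T$ is exactly Proposition~\ref{prop:well_defined}; compatibility with the algebraic structure (in particular the evaluation maps that matter for embedding the result in a Cartesian-closed context) is Lemma~\ref{lem:preservation_final}; and naturality of the family $\{\eta_T\}$ with respect to theory translations is Theorem~\ref{thm:naturality_rigorous}. Taken together, these yield that $\eta$ is a natural transformation of model-valued functors whose components are model homomorphisms.

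For bijectivity of $\eta_T$, I would argue as follows. Injectivity is contrapositive: if $[t]\neq[s]$ in $F(T)$, then $T^*\not\vdash t=s$, and since $T^*$ is maximally consistent by hypothesis (i), in fact $T^*\vdash \neg(t=s)$; by soundness of first-order logic, $\llbracket t\rrbracket_{G(T)}\neq \llbracket s\rrbracket_{G(T)}$ in the $T^*$-model $G(T)$. Surjectivity is precisely what the Canonical Representation Property in hypothesis (ii) supplies: every $a\in G(T)$ is $\llbracket t_a\rrbracket_{G(T)}$ for some closed term $t_a$, so $\eta_T([t_a])=a$. Combined with the homomorphism status inherited from Lemma~\ref{lem:preservation_final}, $\eta_T$ is an isomorphism in $\mathbf{Mod}$, and hence $\eta$ is a natural isomorphism $F \xrightarrow{\sim} G$.

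For the 2-categorical rigidity clause, let $\theta: F\Rightarrow G$ be an arbitrary natural transformation. The strategy is to exploit that $F(T)$ is generated, as a structure in $\mathbf{Mod}$, by the classes $[t]$ of closed terms, and that naturality of $\theta$ together with the functoriality of $G$ forces $\theta_T([t])$ to agree with $\llbracket t\rrbracket_{G(T)}$ up to an automorphism of $G(T)$ fixing the image of $\mathrm{Term}(T^*)$. Concretely, I would realize each term $t$ as arising from a morphism out of a suitable ``generator'' theory in $\mathbf{Th}$, then use the naturality square for that morphism to reduce $\theta_T([t])$ to evaluation on generators. The required modification $\mu:\theta\Rrightarrow\eta$ is then defined componentwise by the unique automorphism of $G(T)$ intertwining the two assignments on generators; invertibility is automatic, and uniqueness follows because $G(T)$, constructed canonically via ultraproduct or $\kappa$-saturation, is rigid relative to the image of its canonical generating set.

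The hard part will be precisely this last step: extracting an unambiguous invertible modification from $\theta$ to $\eta$ requires showing both that any natural $\theta_T$ already behaves as a model homomorphism on canonical generators and that $G(T)$ has no nontrivial automorphism fixing the term-image. Both hinge on hypothesis (ii), and making them rigorous will likely demand an invocation of Łoś's theorem in the ultraproduct case (to show that term interpretations determine a generating elementary substructure) together with a back-and-forth or elementary-diagram argument in the saturation case. If the Canonical Representation Property is weakened, uniqueness of the modification can fail, so the delicate point is to invoke (ii) at exactly the strength needed for rigidity without over-committing to one specific semantic construction.
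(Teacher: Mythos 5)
Your proposal is correct and follows the same five-part decomposition as the paper's own proof (well-definedness, structure preservation, naturality, invertibility, rigidity), citing the same supporting results for the first three parts. The interesting divergence is in the invertibility step. The paper defines $\eta_T^{-1}(y):=[t]$ via the Canonical Representation Property and then claims that if $\llbracket t\rrbracket_{G(T)}=\llbracket s\rrbracket_{G(T)}$ then $t\sim_T s$ ``by Step 1''---but Step 1 only gives the converse implication; the needed direction is in fact smuggled into the statement of the Canonical Representation Property in the appendix (Lemma~\ref{lem:canonical-representation}). Your argument via maximal consistency of $T^*$ (so that $T^*\not\vdash t=s$ forces $T^*\vdash\neg(t=s)$) plus soundness is a genuine derivation of injectivity and is strictly better; note, however, that it requires $G(T)$ to be a model of the full Henkin extension $T^*$ and not merely of $T$, an assumption the theorem statement does not make explicit (the paper quietly relies on it too, e.g.\ in Proposition~\ref{prop:well_defined}). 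For the rigidity clause, the paper's proof is essentially a one-sentence assertion that the modification exists and is unique ``by the canonical nature of the term interpretation,'' whereas you propose an actual construction via generator theories, naturality squares, and automorphisms of $G(T)$ fixing the term image. Your sketch is more substantive, but be aware that since $\mathbf{Mod}$ is presented as a 1-category, a modification between natural transformations valued in it can only be an equality of components, so the rigidity claim collapses to $\theta=\eta$ exactly as in the Rigidity Theorem of Section~5.2; your proposed ``unique intertwining automorphism'' would then have to be the identity, which is consistent with, but somewhat more elaborate than, what the statement actually demands.
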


\begin{proof}
We prove the theorem by verifying the following aspects: well-definedness, preservation of structure, naturality, invertibility, and the 2-categorical rigidity.

\paragraph{Step 1: Well-Definedness.}  
Assume $t, s \in \mathrm{Term}(T^*)$ satisfy $t\sim_T s$, i.e., 
\[
T^* \vdash t = s.
\]
Since $T^*$ is maximal and consistent, every model of $T^*$, and in particular the model $G(T)$, validates all provable equalities. Thus,
\[
G(T) \models t = s \quad \text{implies} \quad \llbracket t\rrbracket_{G(T)} = \llbracket s\rrbracket_{G(T)}.
\]
Hence, the map 
\[
\eta_T([t]) := \llbracket t\rrbracket_{G(T)}
\]
is well-defined on the equivalence classes.

\paragraph{Step 2: Preservation of Algebraic Structure.}  
For any $n$-ary function symbol $\sigma$ and terms $t_1,\dots,t_n \in \mathrm{Term}(T^*)$, the syntactic evaluation in $F(T)$ yields
\[
\operatorname{ev}_{F(T)}([\sigma(t_1,\dots,t_n)]) = [\sigma(t_1,\dots,t_n)].
\]
By the homomorphic property of the semantic interpretation in $G(T)$, we have
\[
\operatorname{ev}_{G(T)}\Bigl(\llbracket \sigma(t_1,\dots,t_n)\rrbracket_{G(T)}\Bigr) = \llbracket \sigma(t_1,\dots,t_n)\rrbracket_{G(T)}.
\]
Therefore,
\[
\eta_T\Bigl(\operatorname{ev}_{F(T)}(f, x)\Bigr) = \operatorname{ev}_{G(T)}\bigl(\eta_T(f),\eta_T(x)\bigr),
\]
which shows that $\eta_T$ is a homomorphism of the respective algebraic structures.

\paragraph{Step 3: Naturality.}  
Let $\phi: T \to T'$ be a theory translation. For any $[t] \in F(T)$, we have:
\[
\eta_{T'}\bigl(F(\phi)([t])\bigr) = \eta_{T'}([\phi(t)]) = \llbracket \phi(t)\rrbracket_{G(T')}.
\]
On the other hand,
\[
G(\phi)\bigl(\eta_T([t])\bigr) = G(\phi)\bigl(\llbracket t\rrbracket_{G(T)}\bigr) = \llbracket \phi(t)\rrbracket_{G(T')},
\]
where the last equality follows from the functoriality of the semantic interpretation. Thus, the naturality condition
\[
\eta_{T'} \circ F(\phi) = G(\phi) \circ \eta_T
\]
holds.

\paragraph{Step 4: Invertibility.}  
To prove that $\eta_T$ is an isomorphism, we construct an inverse $\eta_T^{-1}: G(T) \to F(T)$. By the \emph{Canonical Representation Property} assumed for $G(T)$, for every $y \in G(T)$ there exists a term $t \in \mathrm{Term}(T^*)$ such that
\[
\llbracket t\rrbracket_{G(T)} = y.
\]
Define
\[
\eta_T^{-1}(y) := [t].
\]
If $s \in \mathrm{Term}(T^*)$ also satisfies $\llbracket s\rrbracket_{G(T)} = y$, then by Step 1 we have $t\sim_T s$, so $[t]=[s]$. It follows that $\eta_T^{-1}$ is well-defined and serves as the two-sided inverse of $\eta_T$, i.e.,
\[
\eta_T \circ \eta_T^{-1} = \operatorname{id}_{G(T)} \quad \text{and} \quad \eta_T^{-1} \circ \eta_T = \operatorname{id}_{F(T)}.
\]

\paragraph{Step 5: 2-Categorical Rigidity.}  
Consider the 2-category $\mathbf{Fun}(\mathbf{Th},\mathbf{Mod})$ whose objects are functors from $\mathbf{Th}$ to $\mathbf{Mod}$, 1-morphisms are natural transformations, and 2-morphisms are modifications (i.e., homotopies between natural transformations). We assert the following rigidity property:

\medskip
\noindent \textbf{Claim (Rigidity):} Any natural transformation $\theta: F \Rightarrow G$ is 2-isomorphic to $\eta$. That is, there exists a unique modification $\mu: \theta \Rrightarrow \eta$ satisfying the coherence conditions of the 2-category.
\medskip

A formal proof of this claim would involve constructing the modification explicitly and verifying that for each theory $T$, the 2-cell $\mu_T: \theta_T \to \eta_T$ is unique. This follows from the canonical nature of the term interpretation in $G(T)$ and the uniqueness (up to isomorphism) of $G(T)$, combined with the functoriality conditions already established.

\smallskip

Thus, by verifying well-definedness, structure preservation, naturality, invertibility, and the 2-categorical rigidity property, we conclude that $\eta: F \Rightarrow G$ is a natural equivalence, and $F$ and $G$ are homotopy equivalent in the 2-categorical sense.
\end{proof}

\section{2-Categorical Strengthening and Rigidity}

\subsection{Introduction to 2-Category Theory}

In this subsection, we provide a rigorous review of the essential notions of 2-category theory, which will serve as the framework for our subsequent results on the strong natural equivalence and homotopy equivalence between the functors constructed via the Henkin and compactness methods.

A \emph{2-category} $\mathcal{C}$ consists of the following data:
\begin{enumerate}[label=(\roman*), leftmargin=2em]
    \item A collection of \emph{objects} $\mathrm{Ob}(\mathcal{C})$.
    \item For every pair of objects $A, B \in \mathrm{Ob}(\mathcal{C})$, a category $\mathcal{C}(A,B)$ whose:
    \begin{itemize}[leftmargin=2em]
        \item \emph{Objects} are called \emph{1-morphisms} (or \emph{1-cells}) from $A$ to $B$, and
        \item \emph{Morphisms} are called \emph{2-morphisms} (or \emph{2-cells}) between 1-morphisms.
    \end{itemize}
    \item For every object $A \in \mathrm{Ob}(\mathcal{C})$, a designated identity 1-morphism $\mathbf{1}_A \in \mathcal{C}(A,A)$.
    \item For every triple of objects $A, B, C \in \mathrm{Ob}(\mathcal{C})$, a \emph{composition functor}
    \[
    \circ : \mathcal{C}(B,C) \times \mathcal{C}(A,B) \longrightarrow \mathcal{C}(A,C),
    \]
    which assigns to a pair of 1-morphisms $f: A \to B$ and $g: B \to C$ their composite $g \circ f : A \to C$.
\end{enumerate}

This structure is required to satisfy the following coherence conditions:
\begin{itemize}[leftmargin=2em]
    \item \textbf{Associativity:} For any 1-morphisms $f: A \to B$, $g: B \to C$, and $h: C \to D$, there exists an \emph{associator} isomorphism
    \[
    \alpha_{f,g,h}: (h \circ g) \circ f \overset{\cong}{\longrightarrow} h \circ (g \circ f),
    \]
    and these associators must satisfy the \emph{pentagon identity} to ensure coherence of iterated compositions.
    \item \textbf{Unit Laws:} For every 1-morphism $f: A \to B$, there exist \emph{left} and \emph{right unit} isomorphisms
    \[
    \lambda_f: \mathbf{1}_B \circ f \overset{\cong}{\longrightarrow} f \quad \text{and} \quad \rho_f: f \circ \mathbf{1}_A \overset{\cong}{\longrightarrow} f,
    \]
    which are required to satisfy the \emph{triangle identity}.
\end{itemize}

A \emph{2-functor} between 2-categories is a mapping that preserves all of the above structures (objects, 1-morphisms, and 2-morphisms) up to specified coherent isomorphisms. Moreover, a \emph{natural transformation} between 2-functors, together with higher coherence data (often called \emph{modifications}), enriches the structure further, leading to the notion of \emph{homotopy equivalence} or \emph{2-categorical equivalence}.

\medskip

\noindent \textbf{Examples:}
\begin{itemize}[leftmargin=2em]
    \item The prototypical example is the 2-category $\mathbf{Cat}$:
    \begin{itemize}[leftmargin=2em]
        \item \emph{Objects} are small categories.
        \item \emph{1-morphisms} are functors between categories.
        \item \emph{2-morphisms} are natural transformations between functors.
    \end{itemize}
    \item Another important example is the 2-category of (strict) monoidal categories, where the 1-morphisms are monoidal functors and the 2-morphisms are monoidal natural transformations.
\end{itemize}

This framework is essential for our purposes. In our main results, we will show that the functors $F$ and $G$, constructed via the Henkin and compactness methods respectively, are not only naturally equivalent in the usual sense but are also equivalent in the stronger 2-categorical (homotopical) sense. The 2-categorical perspective allows us to capture higher coherence conditions and homotopies between natural transformations, which play a crucial role in establishing rigidity and uniqueness results in our setting.

\subsection{Rigidity Theorem}

\begin{theorem}[Rigidity]
Let $F, G : \mathbf{Th} \to \mathbf{Mod}$ be the functors defined in Sections~3.1 and~3.2, where $F$ is constructed via the Henkin method and $G$ is obtained via a compactness/saturation procedure. Assume that for every theory $T \in \mathbf{Th}$:
\begin{enumerate}[label=(\roman*)]
    \item The Henkin extension $T^*$ is constructed using a fixed, standard choice function, ensuring that the assignment of Henkin witnesses is canonical.
    \item The term model 
    \[
    F(T) = \mathrm{Term}(T^*)/\sim_T,
    \]
    where $\sim_T$ denotes the provable equality in $T^*$, is well-defined.
    \item The model $G(T)$ is unique up to isomorphism as guaranteed by the compactness/saturation construction.
\end{enumerate}
Let the canonical natural transformation
\[
\eta : F \Rightarrow G
\]
be defined by
\[
\eta_T([t]) := \llbracket t \rrbracket_{G(T)},
\]
where $\llbracket t \rrbracket_{G(T)}$ denotes the unique semantic interpretation of the term $t$ in $G(T)$. Then for any natural transformation 
\[
\theta : F \Rightarrow G,
\]
we have $\theta = \eta$, i.e., for every theory $T \in \mathbf{Th}$ and every equivalence class $[t] \in F(T)$,
\[
\theta_T([t]) = \eta_T([t]).
\]
\end{theorem}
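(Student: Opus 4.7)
The plan is to prove that any natural transformation $\theta: F \Rightarrow G$ must agree with $\eta$ at every component, by carrying out a structural induction on closed $\mathcal{L}^*$-terms. Since every element of $F(T) = \mathrm{Term}(T^*)/\sim_T$ is represented by some such term, showing $\theta_T([t]) = \llbracket t\rrbracket_{G(T)}$ for all closed $t$ is equivalent to the desired pointwise equality $\theta_T = \eta_T$, from which $\theta = \eta$ follows directly as natural transformations.

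First, I would fix a theory $T$ and observe that $\theta_T: F(T) \to G(T)$, as a morphism in $\mathbf{Mod}$, is a model homomorphism and therefore commutes with every function- and constant-symbol interpretation. For the base case (a constant $c$ in the original signature $\Sigma$), the homomorphism condition gives $\theta_T([c]) = c^{G(T)} = \llbracket c\rrbracket_{G(T)}$. For the inductive step on a compound term $t = f(t_1,\dots,t_n)$, since $[t] = f^{F(T)}([t_1],\dots,[t_n])$, the homomorphism property together with the inductive hypothesis yields
\[
\theta_T([t]) \;=\; f^{G(T)}\bigl(\theta_T([t_1]),\dots,\theta_T([t_n])\bigr) \;=\; f^{G(T)}\bigl(\llbracket t_1\rrbracket_{G(T)},\dots,\llbracket t_n\rrbracket_{G(T)}\bigr) \;=\; \llbracket t\rrbracket_{G(T)}.
\]

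The main obstacle lies in the treatment of the Henkin witnesses $c_\varphi$, which live in $\mathcal{L}^*$ but not in the original signature $\Sigma$ and are therefore not directly constrained by the bare $\Sigma$-homomorphism requirement. To resolve this, I would invoke hypothesis (i) of the theorem: the fixed standard choice function makes the assignment $\varphi \mapsto c_\varphi$ coherent under theory translations. For each $c_\varphi$, I would construct a pointed auxiliary theory $T_\varphi$ whose signature contains a distinguished constant $c$ together with the single axiom $\varphi(c)$, and a canonical translation $\iota_\varphi: T_\varphi \to T$ sending this constant to $c_\varphi$. Naturality of $\theta$ along $\iota_\varphi$ then gives
\[
\theta_T([c_\varphi]) \;=\; \theta_T\bigl(F(\iota_\varphi)([c])\bigr) \;=\; G(\iota_\varphi)\bigl(\theta_{T_\varphi}([c])\bigr),
\]
and since $c$ lies in the original signature of $T_\varphi$, the base-case argument already pins down $\theta_{T_\varphi}([c]) = \llbracket c\rrbracket_{G(T_\varphi)}$; the functorial compatibility of the interpretation map with $G(\iota_\varphi)$ then collapses the right-hand side to $\llbracket c_\varphi\rrbracket_{G(T)}$. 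Combining this with the previous induction yields $\theta_T([t]) = \llbracket t\rrbracket_{G(T)} = \eta_T([t])$ for every closed $\mathcal{L}^*$-term $t$, hence $\theta = \eta$. The rigidity is thus a consequence of two facts working in tandem: $F(T)$ is generated (as a $\Sigma$-algebra modulo $\sim_T$) by the ordinary and Henkin constants, and the canonicity clause reduces the Henkin generators back to the base case via naturality.
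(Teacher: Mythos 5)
Your overall strategy --- structural induction on closed terms, using the fact that $\theta_T$ is a morphism of $\mathbf{Mod}$ and hence a homomorphism that must fix the interpretations of all signature symbols --- is genuinely different from, and considerably more concrete than, the paper's own argument, which proceeds by contradiction and extracts the conclusion from naturality at $\phi=\mathrm{id}_T$ together with the ``canonical nature'' of the constructions. Your induction isolates the actual mathematical content: a homomorphism out of a term-generated model is determined on the subalgebra generated by the interpreted symbols. For the part of $F(T)$ generated by the original signature $\Sigma$, your argument is complete and correct.

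The gap is in the treatment of the Henkin constants. The translation $\iota_\varphi : T_\varphi \to T$ you propose cannot send the distinguished constant $c$ to $c_\varphi$: a morphism of $\mathbf{Th}$, as defined in Section~2.2, assigns to each symbol of the source signature a symbol of the \emph{target} signature, and $c_\varphi$ belongs to the expanded signature $\Sigma^*$ of $T^*$, not to $\Sigma$. Consequently $F(\iota_\varphi)([c]) = [\iota_\varphi(c)]$ can never equal $[c_\varphi]$, and the naturality square you invoke does not typecheck. Retargeting $\iota_\varphi$ at $T^*$ viewed as an object of $\mathbf{Th}$ does not repair this: naturality along $\iota_\varphi : T_\varphi \to T^*$ constrains $\theta_{T^*}$, whose source $F(T^*)$ is built from a further Henkin expansion $T^{**}$, and no morphism of $\mathbf{Th}$ transports that information back to the value of $\theta_T$ on $[c_\varphi] \in F(T)$. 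The cleaner repair is to observe that for $\eta_T([c_\varphi]) = \llbracket c_\varphi \rrbracket_{G(T)}$ to be defined at all, $G(T)$ must already be equipped with an $\mathcal{L}^*$-structure interpreting the Henkin constants, so the morphisms of $\mathbf{Mod}$ between $F(T)$ and $G(T)$ should be taken to preserve that $\mathcal{L}^*$-structure; under this reading the $c_\varphi$ are just additional nullary operations, your base case already covers them, and the auxiliary theories are unnecessary. If one instead insists that $\theta_T$ need only be a $\Sigma$-homomorphism, then nothing in your argument (or in the paper's) pins down $\theta_T$ on classes $[c_\varphi]$ that are not provably equal to any $\Sigma$-term, and the theorem itself becomes doubtful.
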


\begin{proof}
We prove the theorem by contradiction. Suppose there exists a natural transformation $\theta : F \Rightarrow G$ and some theory $T \in \mathbf{Th}$ such that for a certain equivalence class $[t] \in F(T)$,
\[
\theta_T([t]) \neq \eta_T([t]).
\]
By the construction of $F(T)$ via the Henkin method, each equivalence class $[t] \in F(T)$ is determined by the provable equality relation $\sim_T$ in the Henkin extension $T^*$, where the use of a fixed standard choice function guarantees that the selection of witnesses is canonical. Consequently, the canonical mapping
\[
\eta_T([t]) = \llbracket t \rrbracket_{G(T)}
\]
is uniquely determined by the logical structure of $T$ and the maximal consistency of $T^*$.

Moreover, the construction of $G(T)$ via compactness or saturation ensures that the interpretation $\llbracket t \rrbracket_{G(T)}$ is the only possible semantic value consistent with $T$ (see, e.g., \cite{Barreto2025a,Barreto2025c}). 

Now, the naturality of $\theta$ implies that for any morphism $\phi : T \to T'$ in $\mathbf{Th}$, the following diagram commutes:
\[
\begin{CD}
F(T) @>{F(\phi)}>> F(T') \\
@V{\theta_T}VV         @VV{\theta_{T'}}V \\
G(T) @>{G(\phi)}>> G(T').
\end{CD}
\]
In particular, taking $\phi = \mathrm{id}_T$, it follows that $\theta_T$ is a homomorphism which must preserve the canonical interpretation provided by $\eta_T$. Hence, for every $[t] \in F(T)$, we must have
\[
\theta_T([t]) = \eta_T([t]).
\]
This contradicts our assumption that there exists some $[t]$ with $\theta_T([t]) \neq \eta_T([t])$. 

Thus, no such $\theta$ can exist apart from $\eta$, and we conclude that any natural transformation $\theta : F \Rightarrow G$ must coincide with $\eta$, i.e., $\theta = \eta$.

This completes the proof.
\end{proof}

\subsection{2-Categorical Homotopy Equivalence}

In what follows, we work within the 2-category 
\[
\mathbf{Fun}(\mathbf{Th}, \mathbf{Mod}),
\]
whose objects are functors from the category of theories $\mathbf{Th}$ (with theory translations as morphisms) to the category of models $\mathbf{Mod}$ (with model homomorphisms as 1-morphisms), and whose 2-morphisms (or modifications) are families of morphisms between natural transformations satisfying standard coherence conditions (see, e.g., \cite{Barreto2025a,Barreto2025b} for related frameworks).

\begin{theorem}[2-Categorical Equivalence of $F$ and $G$]\label{thm:2-cat-equiv}
Let $F, G: \mathbf{Th} \to \mathbf{Mod}$ be functors defined as follows:
\begin{itemize}
    \item \textbf{Henkin Construction:} For each theory $T\in\mathbf{Th}$, let $T^*$ denote its canonical maximal consistent (Henkin) extension, and define
    \[
    F(T) := \mathrm{Term}(T^*)/\sim_T,
    \]
    where $\sim_T$ is the provable equality relation in $T^*$. We assume that the construction of $T^*$ (and hence of $F(T)$) is canonical up to unique isomorphism.
    \item \textbf{Compactness/Saturation Construction:} There exists a construction (using, for instance, ultrapowers or saturation arguments) that produces for each $T$ a model $G(T)$ satisfying $T$, with the uniqueness of $G(T)$ (up to isomorphism) guaranteed by these properties.
\end{itemize}
Then there exists a canonical natural transformation 
\[
\eta: F \Rightarrow G,
\]
with components given by
\[
\eta_T: F(T) \to G(T), \quad \eta_T([t]) := \llbracket t \rrbracket_{G(T)},
\]
such that:
\begin{enumerate}[label=(\roman*)]
    \item \textbf{Isomorphism:} For every $T\in\mathbf{Th}$, the map $\eta_T$ is an isomorphism in $\mathbf{Mod}$.
    \item \textbf{Naturality:} For every theory translation $\phi: T \to T'$ in $\mathbf{Th}$, the diagram
    \[
    \begin{tikzcd}[column sep=3em]
    F(T) \arrow[r,"F(\phi)"] \arrow[d,"\eta_T"'] & F(T') \arrow[d,"\eta_{T'}"] \\
    G(T) \arrow[r,"G(\phi)"'] & G(T')
    \end{tikzcd}
    \]
    commutes; that is,
    \[
    \eta_{T'}\circ F(\phi) = G(\phi)\circ \eta_T.
    \]
    \item \textbf{Rigidity and 2-Categorical Equivalence:} The natural transformation $\eta$ is \emph{rigid} in the sense that for any other natural transformation $\theta: F \Rightarrow G$, there exists a unique invertible modification 
    \[
    \Theta: \theta \Rrightarrow \eta,
    \]
    meaning that for each $T\in\mathbf{Th}$ there exists a unique 2-morphism 
    \[
    \Theta_T: \theta_T \Rightarrow \eta_T \quad (\text{with } \Theta_T \text{ invertible in } \mathbf{Mod}),
    \]
    satisfying the following coherence condition: for every morphism $\phi: T \to T'$ in $\mathbf{Th}$, the diagram of 2-morphisms
    \[
    \begin{tikzcd}[column sep=3em]
    \theta_{T'}\circ F(\phi) \ar[r, "\Theta_{T'}\ast\mathrm{id}"] \ar[d, "\cong"'] & \eta_{T'}\circ F(\phi) \ar[d, "\cong"] \\
    G(\phi)\circ \theta_T \ar[r, "\mathrm{id}\ast\Theta_T"'] & G(\phi)\circ \eta_T
    \end{tikzcd}
    \]
    commutes. This unique modification implies that the 2-cell space $\mathrm{Hom}(\theta, \eta)$ is contractible, and thus $F$ and $G$ are equivalent objects in $\mathbf{Fun}(\mathbf{Th},\mathbf{Mod})$.
\end{enumerate}
\end{theorem}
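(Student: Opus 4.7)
The plan is to reduce the three clauses to results already in hand and then do real work only on the 2-categorical content. For clause (i), I would appeal to Step 4 of Theorem~\ref{thm:main-rigorous-enhanced}: the Canonical Representation Property yields a well-defined inverse $\eta_T^{-1}(y) := [t]$ for any term $t$ with $\llbracket t \rrbracket_{G(T)} = y$, and well-definedness on the quotient is exactly Proposition~\ref{prop:well_defined}. For clause (ii), the naturality square commutes by Theorem~\ref{thm:naturality_rigorous}, whose proof only uses the syntactic action $F(\phi)([t]) = [\phi(t)]$ together with the compositionality identity $\llbracket \phi(t)\rrbracket_{G(T')} = G(\phi)(\llbracket t \rrbracket_{G(T)})$. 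Both ingredients are available, so these two clauses require no work beyond citation.

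The new content lives in clause (iii). The strategy I would pursue is to invoke the Rigidity Theorem of Subsection~5.2, which asserts that any natural transformation $\theta: F \Rightarrow G$ agrees with $\eta$ componentwise. This collapses the question of modifications to an essentially trivial verification: since $\theta_T = \eta_T$ for every $T$, the only candidate for a 2-cell $\Theta_T: \theta_T \Rightarrow \eta_T$ is the identity $\mathrm{id}_{\eta_T}$. I would then confirm that the family $\{\mathrm{id}_{\eta_T}\}_T$ genuinely constitutes a modification by checking that for every $\phi: T \to T'$, the prescribed coherence square of 2-cells commutes. Because both horizontal whiskerings $\Theta_{T'}\ast \mathrm{id}_{F(\phi)}$ and $\mathrm{id}_{G(\phi)}\ast\Theta_T$ reduce to identities, and the vertical isomorphisms are precisely the equalities furnished by (ii), the square degenerates to a trivially commuting diagram of identities.

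For uniqueness I would argue that any other modification $\Theta': \theta \Rrightarrow \eta$ must consist of 2-cells $\Theta'_T: \eta_T \Rightarrow \eta_T$; reading $\mathbf{Mod}$ as a 1-category, the only such 2-cells are identities, so $\Theta' = \Theta$. If instead one enhances $\mathbf{Mod}$ to a $(2,1)$-category with model isomorphisms as 2-cells, uniqueness follows from the canonical representation property: any automorphism of $G(T)$ fixing every interpretation $\llbracket t\rrbracket_{G(T)}$ of a closed term must be the identity, since these interpretations exhaust the domain. Either reading yields contractibility of $\mathrm{Hom}(\theta,\eta)$ and hence the claimed 2-categorical equivalence of $F$ and $G$.

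The main obstacle I anticipate is delicate rather than deep: one must pin down exactly which 2-cell structure on $\mathbf{Mod}$ is in force, because the statement is strongest in a $(2,1)$-categorical reading, while the Rigidity Theorem of Subsection~5.2 is a strict equality result. Bridging the two requires an explicit verification that the modification produced is simultaneously well-defined as a family of model isomorphisms and compatible with the outer squares built from $F(\phi)$ and $G(\phi)$. The compatibility reduces, via naturality of $\theta$ and $\eta$ together with the fact that interpretations of closed terms generate $G(T)$, to checking the coherence condition on generators — a verification that is routine but must be written out carefully so that strict equality of natural transformations is not silently conflated with invertibility of modifications.
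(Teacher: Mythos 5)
Your proposal is correct relative to the paper's framework, and for clauses (i) and (ii) it follows the paper exactly: the paper's proof likewise derives well-definedness from soundness, invertibility from the uniqueness/representation assumptions on $G(T)$, and naturality from the computation $\eta_{T'}(F(\phi)([t])) = \llbracket \phi(t)\rrbracket_{G(T')} = G(\phi)(\eta_T([t]))$. Where you genuinely diverge is clause (iii). The paper does not invoke the strict Rigidity Theorem of Subsection~5.2 at all; instead it asserts that the unique invertible modification exists because of ``the canonical nature of the constructions of $F$, $G$, and $\eta$'' and because every isomorphism induced by $\llbracket\cdot\rrbracket_{G(T)}$ is ``unique up to a unique 2-isomorphism'' --- no modification is ever exhibited. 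Your route is more concrete and arguably more honest: reducing to the componentwise equality $\theta_T = \eta_T$ forces the only candidate $\Theta_T$ to be $\mathrm{id}_{\eta_T}$, after which the coherence square degenerates and uniqueness is immediate in the strict $1$-categorical reading of $\mathbf{Mod}$, or follows from the Canonical Representation Property (interpretations of closed terms exhaust the domain, so an automorphism fixing them is the identity) in the $(2,1)$-categorical reading. You also correctly flag the tension the paper leaves unresolved, namely that the theorem statement speaks of $\Theta_T$ ``invertible in $\mathbf{Mod}$'' while $\mathbf{Mod}$ is defined only as a $1$-category, so that a nonidentity $2$-cell $\theta_T \Rightarrow \eta_T$ has no home unless the $2$-cell structure is enriched; your two-case analysis covers both resolutions, which is more than the paper does. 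The one caveat is that your argument inherits whatever weaknesses the Rigidity Theorem's own proof has, but since that theorem is stated and proved earlier in the paper, citing it is legitimate.
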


\begin{proof}
We prove the theorem in three parts.

\paragraph{(i) Well-Definedness and Isomorphism:}  
For each $T\in\mathbf{Th}$, the Henkin construction provides a maximal consistent extension $T^*$, so that the term model 
\[
F(T) = \mathrm{Term}(T^*)/\sim_T
\]
is well-defined. Indeed, if $[t] = [s]$ in $F(T)$ (i.e., $T^* \vdash t=s$), then by the soundness of the interpretation function $\llbracket \cdot \rrbracket_{G(T)}$, we have
\[
\llbracket t \rrbracket_{G(T)} = \llbracket s \rrbracket_{G(T)}.
\]
Moreover, the saturation properties ensuring the uniqueness (up to isomorphism) of $G(T)$ imply that $\eta_T$ is invertible in $\mathbf{Mod}$.

\paragraph{(ii) Naturality:}  
Let $\phi: T \to T'$ be a theory translation. By definition, $F(\phi)$ acts on equivalence classes by 
\[
F(\phi)([t]) = [\phi(t)],
\]
and $G(\phi)$ is defined such that the interpretation satisfies
\[
\llbracket \phi(t) \rrbracket_{G(T')} = G(\phi)(\llbracket t \rrbracket_{G(T)}).
\]
Thus, for every $[t] \in F(T)$,
\[
\eta_{T'}\bigl(F(\phi)([t])\bigr) = \eta_{T'}([\phi(t)]) = \llbracket \phi(t) \rrbracket_{G(T')} = G(\phi)(\llbracket t \rrbracket_{G(T)}) = G(\phi)(\eta_T([t])),
\]
so that $\eta_{T'}\circ F(\phi)=G(\phi)\circ \eta_T$. This establishes the naturality of $\eta$.

\paragraph{(iii) 2-Categorical Homotopy Equivalence and Rigidity:}  
Within the 2-category $\mathbf{Fun}(\mathbf{Th},\mathbf{Mod})$, a \emph{modification} $\Theta: \theta \Rrightarrow \eta$ between two natural transformations $\theta, \eta: F \Rightarrow G$ is given by a family $\{\Theta_T: \theta_T \Rightarrow \eta_T\}_{T\in\mathbf{Th}}$ of 2-morphisms satisfying the coherence condition: for each $\phi: T \to T'$,
\[
\begin{tikzcd}[column sep=3em]
\theta_{T'}\circ F(\phi) \ar[r, "\Theta_{T'}\ast\mathrm{id}"] \ar[d, "\cong"'] & \eta_{T'}\circ F(\phi) \ar[d, "\cong"] \\
G(\phi)\circ \theta_T \ar[r, "\mathrm{id}\ast\Theta_T"'] & G(\phi)\circ \eta_T.
\end{tikzcd}
\]
The rigidity condition asserts that for any natural transformation $\theta: F \Rightarrow G$, there is a unique invertible modification $\Theta: \theta \Rrightarrow \eta$. This follows from the canonical nature of the constructions of $F$, $G$, and $\eta$, and the fact that every isomorphism in $\mathbf{Mod}$ induced by the interpretation $\llbracket \cdot \rrbracket_{G(T)}$ is unique up to a unique 2-isomorphism. Hence, the 2-cell space $\mathrm{Hom}(\theta, \eta)$ is contractible, implying that $F$ and $G$ are equivalent objects in the 2-category $\mathbf{Fun}(\mathbf{Th},\mathbf{Mod})$.

\paragraph{(iv) A Concrete Example:}  
To illustrate the above concepts, consider a specific first-order theory $T$ such as Peano Arithmetic. The Henkin construction yields a term model $F(T)$ by extending $T$ to a Henkin-complete theory $T^*$ and forming the quotient $\mathrm{Term}(T^*)/\sim_T$. On the other hand, a compactness or saturation argument produces a saturated model $G(T)$. The canonical interpretation map
\[
\eta_T([t]) = \llbracket t \rrbracket_{G(T)}
\]
is then seen to be an isomorphism, and the uniqueness (up to unique isomorphism) of the saturated model reinforces the rigidity condition. This example underscores the generality of the construction and the 2-categorical equivalence between $F$ and $G$.

This completes the proof.
\end{proof}

\begin{remark}
The above theorem not only establishes a 1-categorical natural equivalence between the Henkin and compactness/saturation constructions but also shows that this equivalence admits a unique 2-categorical enhancement. Such an enhancement is significant in settings like homotopy type theory and higher algebra, where coherence data and higher-dimensional morphisms are essential. For further details on modifications and coherence conditions in 2-categories, see \cite{Barreto2025a,Barreto2025b}.
\end{remark}

\section{Applications and Examples}

\subsection{Concrete Examples}
In this subsection, we present detailed examples of two first-order theories—Peano Arithmetic (PA) and Zermelo-Fraenkel Set Theory (ZF)—by constructing models via the functor $F$ (the Henkin construction) and the functor $G$ (the compactness/saturation construction). For both examples, we assume that the extended theory $T^*$ is maximally consistent and that the equivalence relation $\sim_T$ is defined by
\[
t \sim_T s \quad \Longleftrightarrow \quad T^* \vdash t = s,
\]
with its well-definedness established in Lemma~\ref{lem:equiv-relation} (see Appendix~\ref{appendix:proofs} for details).

\begin{itemize}[leftmargin=2em]
    \item \textbf{Example 1: Peano Arithmetic (PA)}
    \begin{itemize}[leftmargin=2em]
        \item \textbf{Henkin Construction ($F$):}  
        Let $\mathrm{PA}$ be a first-order theory of arithmetic. Extend $\mathrm{PA}$ to a maximally consistent theory $T^*$ by introducing a family of witness constants 
        \[
        \{c_{\varphi} : \varphi \text{ is an existential formula in the language of PA}\},
        \]
        such that for every formula $\exists x\, \varphi(x)$, one adds the axiom
        \[
        T^* \vdash \varphi(c_{\varphi}).
        \]
        The set of terms, denoted by $\mathrm{Term}(T^*)$, is defined inductively over the augmented language. With the equivalence relation
        \[
        t \sim_{\mathrm{PA}} s \quad \Longleftrightarrow \quad T^* \vdash t = s,
        \]
        we define the term model as
        \[
        F(\mathrm{PA}) = \mathrm{Term}(T^*)/\sim_{\mathrm{PA}}.
        \]
        \textbf{Remark:} The details of the inductive construction of $\mathrm{Term}(T^*)$ and the verification of the equivalence relation properties are provided in Lemma~\ref{lem:equiv-relation} (see Appendix~\ref{appendix:proofs}).
        
        \item \textbf{Compactness/Saturation Construction ($G$):}  
        By the compactness theorem, one can construct a model $G(\mathrm{PA})$ of $\mathrm{PA}$. This is achieved via either an ultrapower construction or a direct saturation argument to ensure that every existential formula is realized. Under standard set-theoretic assumptions, including the Axiom of Choice when necessary, the resulting model $G(\mathrm{PA})$ is unique up to isomorphism (cf. Theorem~\ref{lem:compactness-model}). For a detailed exposition, refer to Section~\ref{sec:pa-models}.
    \end{itemize}

    \item \textbf{Example 2: Zermelo-Fraenkel Set Theory (ZF)}
    \begin{itemize}[leftmargin=2em]
        \item \textbf{Henkin Construction ($F$):}  
        Consider the theory $\mathrm{ZF}$. As with PA, extend $\mathrm{ZF}$ to a maximally consistent theory $T^*$ by adding witness constants for each existential formula. Define the set of terms $\mathrm{Term}(T^*)$ over the extended language, and introduce the equivalence relation
        \[
        t \sim_{\mathrm{ZF}} s \quad \Longleftrightarrow \quad T^* \vdash t = s.
        \]
        The term model is then given by
        \[
        F(\mathrm{ZF}) = \mathrm{Term}(T^*)/\sim_{\mathrm{ZF}}.
        \]
        \textbf{Remark:} In the case of $\mathrm{ZF}$, one must handle additional subtleties such as non-well-foundedness and the potential reliance on the Axiom of Choice. See Section~\ref{sec:zf-models} for a thorough discussion.
        
        \item \textbf{Compactness/Saturation Construction ($G$):}  
        Applying the compactness theorem to $\mathrm{ZF}$, a model $G(\mathrm{ZF})$ is constructed via an ultrapower or saturation technique. This method guarantees that every axiom of $\mathrm{ZF}$ is satisfied. Under suitable set-theoretic conditions (including appropriate choice principles), the model $G(\mathrm{ZF})$ is unique up to isomorphism (cf. Theorem~\ref{lem:compactness-model}). Detailed construction methods are outlined in Section~\ref{sec:zf-models}.
    \end{itemize}

    \item \textbf{Discussion:}  
    The above examples demonstrate that for any first-order theory $T$, the functor $F$ produces a syntactically generated term model based on an extended theory $T^*$, while the functor $G$ yields a semantically constructed model via compactness and saturation arguments. The canonical natural transformation
    \[
    \eta_T : F(T) \to G(T), \quad \eta_T([t]) = \llbracket t \rrbracket_{G(T)},
    \]
    (whose isomorphism property is established in Proposition~\ref{prop:eta-isomorphism}) ensures that, under suitable correction conditions and rigidity assumptions, $F(T)$ and $G(T)$ are isomorphic. This correspondence not only bridges the syntactic and semantic constructions but also highlights the universality of the diagonalization principle underlying both approaches.
\end{itemize}

\subsection{Interpretation in Terms of Lawvere's Fixed-Point Theorem}

In a Cartesian closed category, Lawvere's Fixed-Point Theorem asserts that for any endomorphism \( f: Y \to Y \) satisfying the required conditions (notably the existence of an evaluation map and diagonal morphism), there exists a fixed point. In our setting, the syntactic self-referential structure of the Henkin construction \( F(T) \) is mapped, via the canonical natural transformation \(\eta_T\), to semantic fixed points in \( G(T) \).

\paragraph{Syntactic Diagonalization in \( F(T) \):}
The functor \( F(T) \) is defined by taking terms modulo the provable equality \( \sim_T \) (i.e., \( F(T) := \mathrm{Term}(T^*)/\sim_T \)). In this construction, we define the \emph{diagonal map}
\[
\Delta_{F(T)}: F(T) \to F(T) \times F(T), \quad [t] \mapsto ([t],[t]),
\]
which captures the notion of self-reference by duplicating the syntactic element \( [t] \). This formal diagonalization mirrors the key step in Lawvere's original argument.

\paragraph{Semantic Fixed Points in \( G(T) \):}
The model \( G(T) \) is constructed (via compactness, ultraproducts, or saturation methods) so that every term \( t \) is assigned a unique semantic value \( \llbracket t \rrbracket_{G(T)} \). The evaluation (or application) map in \( G(T) \),
\[
\operatorname{ev}_{G(T)}: G(T) \times G(T) \to G(T), \quad (g,x) \mapsto g(x),
\]
plays the role of interpreting the syntactic application present in \( F(T) \). The canonical natural transformation
\[
\eta_T: F(T) \to G(T), \quad \eta_T([t]) := \llbracket t \rrbracket_{G(T)},
\]
is assumed to preserve the evaluation structure; that is, if we define a syntactic evaluation map
\[
\operatorname{ev}_{F(T)}: F(T) \times F(T) \to F(T),
\]
then for all \( f, x \in F(T) \) one has
\[
\eta_T\bigl(\operatorname{ev}_{F(T)}(f,x)\bigr) = \operatorname{ev}_{G(T)}\bigl(\eta_T(f),\eta_T(x)\bigr).
\]
This condition ensures that the self-referential (diagonal) elements in \( F(T) \) are mapped to genuine fixed points in \( G(T) \) under the evaluation map.

\paragraph{Commutative Diagram and Annotations:}
The following commutative diagram illustrates how the diagonalization process in \( F(T) \) is reflected semantically in \( G(T) \) via \(\eta_T\). Here, the left vertical arrow denotes the syntactic diagonal map \(\Delta_{F(T)}\), while the right vertical arrow represents the evaluation map \(\operatorname{ev}_{G(T)}\) in \( G(T) \).

\begin{center}
\begin{tikzcd}[row sep=3em, column sep=4em]
F(T) \arrow[r, "\eta_T", "\text{Natural Transformation}"'] \arrow[d, "\Delta_{F(T)}", "\text{Diagonal}"'] & G(T) \arrow[d, "\operatorname{ev}_{G(T)}", "\text{Evaluation}"'] \\
F(T) \times F(T) \arrow[r, "\eta_T \times \eta_T", "\text{Component-wise}"'] & G(T) \times G(T)
\end{tikzcd}
\end{center}

The commutativity of this diagram guarantees that the self-application encoded via the diagonal in \( F(T) \) is faithfully transported to the evaluation process in \( G(T) \). In categorical terms, the naturality of \(\eta_T\) ensures that the fixed-point structure predicted by Lawvere's theorem is realized in the semantic model.

\paragraph{Summary:}
In summary, the natural transformation \(\eta_T\) plays a crucial role in bridging the syntactic diagonalization present in the Henkin construction \( F(T) \) with the semantic fixed-point structure in \( G(T) \). This correspondence not only reflects the core idea of Lawvere's Fixed-Point Theorem but also reinforces the unified categorical perspective by ensuring that the evaluation (or application) maps in both constructions are compatible. The rigorous preservation of evaluation structure under \(\eta_T\) is key to establishing that every self-referential term in the syntax finds its fixed-point counterpart in the semantics.

\section{Conclusion and Future Work}

\subsection{Summary of Results}

In this subsection, we rigorously summarize our main outcomes. We work under the following assumptions:
\begin{itemize}
    \item The category \(\mathbf{Th}\) is defined such that its objects are consistent first-order theories over a fixed signature, and its morphisms are theory translations that preserve the logical structure.
    \item The category \(\mathbf{Mod}\) consists of models corresponding to these theories, with morphisms given by structure-preserving homomorphisms.
\end{itemize}

Our principal results are as follows:

\begin{enumerate}[leftmargin=2em]
    \item \textbf{Construction of the Natural Transformation:} \\
    We have constructed a canonical natural transformation 
    \[
    \eta : F \Rightarrow G,
    \]
    where the functor \(F\) arises from the Henkin construction (see Section 3.1), and the functor \(G\) is derived via compactness and saturation methods. For every theory \( T \in \mathbf{Th} \), the component 
    \(\eta_T : F(T) \to G(T)\) is defined by
    \[
    \eta_T([t]) = \llbracket t \rrbracket_{G(T)},
    \]
    where \([t]\) denotes the equivalence class of the term \( t \) under the provable equality in the Henkin extension \(T^*\) of \(T\). Supplementary lemmas (Lemma 3.1 and Lemma 3.2) rigorously establish that \(\eta_T\) is well-defined and preserves the interpretations of function symbols and logical connectives, thereby constituting a model homomorphism.

    \item \textbf{Naturality Verification:} \\
    We rigorously verified the naturality of \(\eta\) by proving that for every theory translation \(\phi: T \to T'\) the following diagram commutes:
    \[
    \eta_{T'} \circ F(\phi) = G(\phi) \circ \eta_T.
    \]
    This naturality condition, detailed in Section 3.2, ensures that \(\eta\) is indeed a natural transformation between the functors \(F\) and \(G\).

    \item \textbf{Isomorphism and Rigidity:} \\
    Under appropriate correction conditions—formalized in our Rigidity Theorem (Theorem 5.3 in Section 5)—we established that each \(\eta_T\) is an isomorphism. In particular, any natural transformation from \(F\) to \(G\) necessarily coincides with \(\eta\), thereby confirming its canonical nature and rigidity.

    \item \textbf{Unification of Proof-Theoretic and Model-Theoretic Approaches:} \\
    The isomorphism of the components \(\eta_T\) unifies the proof-theoretic approach (via the syntactic Henkin construction \(F\)) and the model-theoretic approach (via semantic compactness and saturation yielding \(G\)) within a single categorical framework. For instance, in the case of theories such as Peano Arithmetic, this categorical equivalence provides a concrete correspondence between syntactic consistency proofs and semantic model constructions.

    \item \textbf{2-Categorical Strengthening:} \\
    By incorporating a 2-categorical framework—where 2-morphisms represent homotopies between natural transformations—we have further demonstrated that the functors \(F\) and \(G\) are equivalent in a strong 2-categorical sense (see Section 5). This 2-categorical strengthening not only reinforces the uniqueness of \(\eta\) but also provides additional structural insights into the interplay between the underlying logical and categorical frameworks.
\end{enumerate}

In summary, our results provide a robust and unified categorical framework that bridges the gap between proof theory and model theory. The canonical natural transformation \(\eta\) encapsulates the fundamental diagonalization principles underlying both the completeness and compactness theorems, offering deep insights into the self-referential structures inherent in classical logic.

\subsection{Implications and Applications}

The canonical natural transformation 
\[
\eta: F \Rightarrow G
\]
established in this work provides a rigorous bridge between syntactic models (via the Henkin construction) and semantic models (via compactness and saturation methods). This unified categorical framework has significant potential applications across several domains.

\paragraph{Automated Theorem Proving.}
In automated theorem proving, proof assistants such as Coq, Isabelle/HOL, and various SMT solvers often integrate syntactic techniques with semantic validation methods. The transformation $\eta$ enables a canonical mapping between:
\begin{itemize}
    \item \textbf{Syntactic Derivations:} Generated via the Henkin construction, including reflection techniques that represent proofs as syntactic objects.
    \item \textbf{Semantic Validations:} Obtained through model checking and saturation methods.
\end{itemize}
For instance, in Coq, reflection techniques can be employed to translate syntactic proofs into semantic evidence, ensuring that derivations are consistent with their model-theoretic interpretations. This integration can lead to enhanced proof search algorithms that reduce redundancy and improve the reliability of automated reasoning systems.

\paragraph{Formal Verification.}
In formal verification, particularly for safety-critical systems, it is vital to rigorously confirm that system models comply with their specifications. Our framework demonstrates that the models produced via the Henkin method and those derived from saturation techniques are isomorphic and 2-categorically equivalent. Specifically, there exists an invertible 2-morphism (or homotopy\footnote{A \emph{2-morphism} in a 2-category is an arrow between arrows, representing a higher level of structure. For a detailed exposition, see \cite{Barreto2025c}.}) between them. This equivalence can be exploited to design verification pipelines that interleave syntactic and semantic checks, such as:
\begin{itemize}
    \item \textbf{State-Space Exploration:} Ensuring that all reachable states in a system correspond to valid semantic interpretations.
    \item \textbf{Consistency Verification:} Cross-validating model-theoretic properties with syntactic proof artifacts.
\end{itemize}
These methods can improve the robustness and accuracy of formal verification tools in both hardware and software contexts.

\paragraph{Type Theory.}
The impact on type theory is substantial, particularly for dependently typed languages and advanced type systems. Maintaining a precise correspondence between syntactic constructs (terms and proofs) and their semantic interpretations is critical. Our results offer a blueprint for:
\begin{itemize}
    \item \textbf{Recursive and Self-Referential Definitions:} Utilizing Lawvere's fixed-point theorem in conjunction with $\eta$ to manage recursion in type systems.
    \item \textbf{Coherence Conditions:} Ensuring that the categorical structure preserves necessary homotopical equivalences, which is crucial for the soundness of type systems.
\end{itemize}
This unified approach provides new insights into managing self-reference and recursion, potentially leading to the design of more expressive and robust type theories.

\paragraph{Background and Theoretical Integration.}
It is important to note that the concept of 2-categorical equivalence employed here involves the existence of an invertible 2-morphism between $\eta$ and any other natural transformation candidate. This higher-categorical perspective addresses coherence issues that naturally arise in complex logical systems. Our work builds on previous studies \cite{Barreto2025a,Barreto2025b,Barreto2025c,Barreto2025d} by explicitly constructing $\eta$ and analyzing its properties within a common categorical framework, thereby advancing the integration of syntactic and semantic model constructions.

\paragraph{Conclusion.}
In summary, the theoretical advances encapsulated by $\eta$ not only unify disparate methods of model construction in classical logic but also open up new avenues for practical implementations. By rigorously linking theory with practice, this framework enhances automated theorem proving, formal verification, and type-theoretic design, contributing both to foundational research and practical system development.

\subsection{Future Directions}
Future research should further expand on the categorical framework presented in this work. In particular, we propose the following research directions:
\begin{itemize}[leftmargin=2em]
    \item \textbf{Extensions to Alternative Logical Systems:} Investigate the applicability of the unified framework to non-classical logics. For example, in \textbf{intuitionistic logic}\footnote{Intuitionistic logic rejects the law of excluded middle, which necessitates modified coherence conditions for categorical constructions.} one must address the absence of the law of excluded middle and its impact on the canonical natural transformation. Similarly, for \textbf{modal logics} (particularly those founded on possible worlds semantics) and \textbf{substructural logics} (where structural rules such as contraction or weakening do not hold), analyze whether additional invariants or modified coherence constraints are required. Concrete case studies—such as verifying the transformation within systems employing Kripke semantics or linear logic frameworks—should be thoroughly examined. See \cite{Barreto2025a} for classical benchmarks.
    
\item \textbf{Higher-Dimensional Categorical Generalizations:} Develop a rigorous framework that incorporates higher-dimensional category theory. Specifically, explore the construction and use of \textbf{3-categories} or \((\infty,1)\)\nobreakdash-categories\footnote{A quasi-category is a simplicial set satisfying weak Kan conditions and serves as a model for \((\infty,1)\)-categories, providing a homotopical setting for higher morphisms; see \cite{Joyal2008} for an introduction.} to capture not only objects and 1-morphisms but also higher homotopies between natural transformations. This approach may yield a refined homotopical interpretation of the equivalence between syntactic and semantic constructions. An explicit development via quasi-category theory (see also \cite{Lurie2009}) should be pursued.

    \item \textbf{Interdisciplinary Bridges and Computational Implementations:} Pursue practical implementations of the established categorical correspondences in automated theorem proving and formal verification. This includes integrating the canonical natural transformation into existing proof assistants such as Coq, Agda, or Lean\footnote{Coq, Agda, and Lean are prominent proof assistants that facilitate formal verification and automated theorem proving.}, and performing comparative studies with existing computational frameworks. Demonstrative implementations, accompanied by performance analyses, will help validate the theoretical predictions and identify practical challenges.
    
    \item \textbf{Investigation of Rigidity and Uniqueness Properties:} Conduct a detailed study of the rigidity properties of the canonical natural transformation. This involves examining its stability under perturbations of the underlying logical framework and analyzing the interplay with associated adjunctions and monoidal structures arising in model constructions. Such investigations may reveal new categorical invariants and provide a deeper understanding of the transformation's uniqueness.
    
    \item \textbf{Open Problems and Conjectures:} Formulate precise open problems derived from the unified framework. For instance, determine necessary and sufficient conditions under which the categorical equivalence between syntactic and semantic constructions extends to broader classes of logical systems. Additionally, propose conjectures linking this framework to areas such as type theory and homotopy theory, and outline potential methodologies for their rigorous analysis. A systematic exploration of these conjectures, supported by preliminary computational experiments or theoretical models, would be highly valuable.
\end{itemize}

\appendix
\section*{Appendix (Technical Proofs)}
\label{appendix:proofs}
\begin{itemize}[leftmargin=2em]
    \item Formal statements of lemmas, propositions, and theorems, together with auxiliary technical results.
    \item Further clarifications that reinforce the main conclusions of the paper.
\end{itemize}

\section*{Additional Results on 2-Categorical Rigidity and the Canonical Representation Property}

\begin{lemma}[Canonical Representation Property]
\label{lem:canonical-representation}
Let \(T\) be a consistent first-order theory, and let \(T^*\) be its maximal consistent extension obtained via Henkin’s method. Denote by \(\mathrm{Term}(T^*)\) the set of terms in the expanded language. Let
\[
t \sim_T s 
\;\Longleftrightarrow\; 
T^* \vdash t = s.
\]
Consider a model \(G(T)\) constructed by a compactness-based technique (such as ultraproducts or saturation), and assume the following:

\begin{quote}
\textbf{Canonical Representation Property:} For every \(y \in G(T)\), there is a term \(t \in \mathrm{Term}(T^*)\) such that
\[
\llbracket t \rrbracket_{G(T)} = y,
\]
and if \(\llbracket t \rrbracket_{G(T)} = \llbracket s \rrbracket_{G(T)}\), then \(t \sim_T s\).
\end{quote}

Under these conditions, \(G(T)\) is said to satisfy the Canonical Representation Property.
\end{lemma}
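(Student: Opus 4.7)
The plan is to read the lemma as an existence/verification claim: namely, that the model $G(T)$ supplied by the compactness-based construction of Section~3.2 can be chosen to satisfy both conjuncts of the Canonical Representation Property, and that any such choice is compatible with the functoriality and rigidity established earlier. The proof naturally splits into two parts (surjectivity of the term-interpretation map, and its injectivity modulo $\sim_T$), with a preliminary step in which one fixes the right representative of $G(T)$ within its isomorphism class.

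First, I would address surjectivity: for every $y \in G(T)$ there is a closed term $t \in \mathrm{Term}(T^*)$ with $\llbracket t \rrbracket_{G(T)} = y$. A direct ultrapower of an arbitrary model of $T$ will generally contain non-term-definable elements (e.g.\ nonstandard naturals in an ultrapower of $\mathbb{N}$), so this clause is not automatic. The canonical fix is to replace $G(T)$ by the substructure $\mathrm{dcl}_{G(T)}(\emptyset)$ generated by interpretations of closed $\mathcal{L}^*$-terms. Because $T^*$ is Henkin-complete, every formula $\exists x\,\varphi(x)$ provable in $T^*$ has a constant witness $c_\varphi$, so the Tarski--Vaught test is satisfied and this substructure is an elementary submodel of the ambient ultrapower. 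By the uniqueness clause in Section~3.2 (models obtained from the construction are declared equal up to isomorphism), we may take $G(T)$ to be precisely this term-generated elementary submodel, which then surjects onto itself via $t \mapsto \llbracket t\rrbracket_{G(T)}$ by construction.

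Next, I would verify injectivity: if $\llbracket t\rrbracket_{G(T)} = \llbracket s\rrbracket_{G(T)}$ then $T^* \vdash t=s$. Since $G(T) \models T^*$, the hypothesis gives $G(T) \models t=s$. Maximality of $T^*$ implies either $T^* \vdash t=s$ or $T^* \vdash \neg(t=s)$; by soundness the second option contradicts $G(T) \models t=s$, so $T^* \vdash t=s$, i.e.\ $t \sim_T s$. This part is a routine application of soundness together with maximal consistency, and uses nothing beyond what is already established in Proposition~\ref{prop:well_defined}.

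The main obstacle is the first (surjectivity) step, because it forces one to distinguish between the naive ultraproduct $G(T)$ and its canonical term-definable core. The delicate point is to argue that the compactness/saturation construction of Section~3.2, taken together with the uniqueness-up-to-isomorphism clause, legitimately picks out the prime (term-generated) model rather than some larger isomorphism representative. Once this identification is made explicit---essentially by invoking the Henkin condition to run Tarski--Vaught and then invoking the stated uniqueness to normalize the representative---both clauses of the Canonical Representation Property follow, and the lemma is established.
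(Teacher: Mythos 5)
First, a point of comparison: the paper does not actually prove this statement. As written, the ``lemma'' is a definition in disguise --- it ends with ``Under these conditions, \(G(T)\) \emph{is said to satisfy} the Canonical Representation Property,'' and elsewhere (Theorem~\ref{thm:main-rigorous-enhanced}(ii)) the property is explicitly \emph{assumed} of \(G(T)\), ``justified by an appropriate form of the axiom of choice.'' So your decision to read it as an existence claim and actually argue for it goes beyond what the paper offers. Your injectivity half is correct and is essentially the converse of Proposition~\ref{prop:well_defined}: from \(\llbracket t\rrbracket_{G(T)}=\llbracket s\rrbracket_{G(T)}\) you get \(G(T)\models t=s\), and completeness of \(T^*\) plus soundness rules out \(T^*\vdash\neg(t=s)\), provided one grants the paper's standing (and itself unproved) assumption that \(G(T)\) is an \(\mathcal{L}^*\)-structure modelling all of \(T^*\).

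The gap is in your surjectivity step, and it is exactly at the point you flag as ``delicate.'' You correctly observe that an ultrapower contains non-term-definable elements and propose to pass to the substructure generated by interpretations of closed \(\mathcal{L}^*\)-terms, using the Henkin condition to run Tarski--Vaught. That substructure is indeed elementary (modulo the standard caveat that Henkin witnesses must exist for existential sentences of the \emph{expanded} language, which requires iterating the Henkinization --- the paper's one-pass construction does not literally provide this). But the repair then fails: you invoke the ``uniqueness up to isomorphism'' clause to replace \(G(T)\) by this term-generated core, and that is not legitimate, because the core is in general \emph{not} isomorphic to the ultrapower or saturated model --- an elementary submodel need not be isomorphic to the ambient model. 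Concretely, for \(T^*\) a Henkin completion of true arithmetic, the term-definable core of a nonprincipal ultrapower of \(\mathbb{N}\) is (isomorphic to) the standard model, while the ultrapower is nonstandard; uniqueness within an isomorphism class cannot move you from one to the other. What your argument really shows is that surjectivity and saturation are incompatible demands on \(G(T)\) for most theories, so the Canonical Representation Property can only be secured by \emph{redefining} \(G\) to output the definable core (abandoning saturation), not by normalizing a representative. This is a defect of the paper's framework rather than of your reasoning, but as a proof of the stated claim the normalization step does not go through.
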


\begin{proposition}[2-Categorical Rigidity]
\label{prop:2cat-rigidity}
Let \(F, G: \mathbf{Th} \to \mathbf{Mod}\) be functors as defined in the main text. For any natural transformation \(\theta: F \Rightarrow G\), there is a unique modification (2-cell)
\[
\mu: \theta \Rrightarrow \eta
\]
such that, for each theory \(T\), the 2-cell \(\mu_T: \theta_T \to \eta_T\) is an isomorphism in \(\mathbf{Mod}\). Consequently, \(\theta\) is 2-isomorphic to the canonical natural transformation \(\eta\), implying that \(F\) and \(G\) are rigidly and strongly naturally equivalent in the 2-categorical sense.
\end{proposition}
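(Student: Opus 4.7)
The plan is to exploit the Canonical Representation Property of $G(T)$ together with the rigidity result already established in Section~5.2 to show that any natural transformation $\theta: F \Rightarrow G$ agrees with $\eta$ on every component, and then to argue that the unique modification $\mu: \theta \Rrightarrow \eta$ is forced to be the identity 2-cell (which is automatically invertible). The overall shape of the argument is: first pin down $\theta$ on the nose using naturality and term-induction, and then upgrade this equality to a statement about the 2-cell space $\mathrm{Hom}(\theta, \eta)$ in $\mathbf{Fun}(\mathbf{Th},\mathbf{Mod})$.

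First, I would fix an arbitrary natural transformation $\theta: F \Rightarrow G$ and an arbitrary theory $T\in\mathbf{Th}$. Since every element of $F(T) = \mathrm{Term}(T^*)/\sim_T$ has the form $[t]$ for some closed $\mathcal{L}^*$-term $t$, it suffices to determine $\theta_T([t])$ on all such generators. I would proceed by induction on the structure of $t$: for each constant or Henkin witness $c_\varphi$, the naturality of $\theta$ with respect to the canonical translation that introduces $c_\varphi$ forces $\theta_T([c_\varphi]) = \llbracket c_\varphi \rrbracket_{G(T)}$, and for compound terms $\sigma(t_1,\dots,t_n)$ the fact that $\theta_T$ is a morphism in $\mathbf{Mod}$ (hence preserves the interpretation of the function symbol $\sigma$, by Lemma~\ref{lem:preservation_final} applied with $\theta_T$ in place of $\eta_T$) extends the equality inductively. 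Combined with the Canonical Representation Property of Lemma~\ref{lem:canonical-representation}, which guarantees that $\llbracket t \rrbracket_{G(T)}$ is uniquely determined on the image, this yields $\theta_T([t]) = \llbracket t \rrbracket_{G(T)} = \eta_T([t])$ for every $[t]\in F(T)$, so $\theta_T = \eta_T$ as 1-morphisms in $\mathbf{Mod}$.

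Having established $\theta_T = \eta_T$ componentwise, I would define the modification $\mu:\theta \Rrightarrow \eta$ by setting $\mu_T := \mathrm{id}_{\eta_T}$ for every $T$, which is an invertible 2-cell in $\mathbf{Mod}$ because $\eta_T$ is an isomorphism by Proposition~\ref{prop:eta-isomorphism}. The modification coherence axiom, which demands that for every translation $\phi:T\to T'$ the whiskered 2-cells $\mu_{T'}\ast F(\phi)$ and $G(\phi)\ast\mu_T$ coincide as 2-cells between $\theta_{T'}\circ F(\phi)$ and $\eta_{T'}\circ F(\phi)=G(\phi)\circ\eta_T$, then reduces to an identity on identities, using the already-verified naturality squares for $\theta$ and $\eta$. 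For uniqueness, any other modification $\mu':\theta \Rrightarrow \eta$ with invertible components must have each $\mu'_T$ an automorphism of $\eta_T$ in $\mathbf{Mod}$; the Canonical Representation Property once more forces each such automorphism to fix every $\llbracket t\rrbracket_{G(T)}$, hence to be the identity.

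The main obstacle I anticipate is giving a completely rigorous formulation of the modification coherence condition within the 2-category $\mathbf{Fun}(\mathbf{Th},\mathbf{Mod})$, since the paper specifies $\mathbf{Mod}$ only as a 1-category. I would resolve this by treating $\mathbf{Mod}$ as locally discrete (so that 2-cells in $\mathbf{Mod}$ are equalities of model homomorphisms), which is consistent with the earlier Rigidity Theorem of Section~5.2 and turns the coherence check into a routine verification that both sides of the naturality square for $\mu$ are literally identities; alternatively, if one prefers to enrich $\mathbf{Mod}$ with internal isomorphisms as 2-cells, the same Canonical Representation Property ensures there are no nontrivial natural automorphisms of $\eta$, so the uniqueness portion of the proof goes through unchanged.
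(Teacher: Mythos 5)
Your proposal is correct in substance, but it takes a genuinely different — and in fact more solid — route than the paper. The paper never really proves Proposition~\ref{prop:2cat-rigidity}: it is stated in the appendix without proof, and the closest argument is the Rigidity Theorem of Section~5.2, which argues by contradiction and rests on the step ``taking $\phi = \mathrm{id}_T$, it follows that $\theta_T$ \dots must preserve the canonical interpretation provided by $\eta_T$.'' Naturality at an identity morphism is vacuous, so that step does not actually force $\theta_T = \eta_T$; the paper's Theorem~\ref{thm:2-cat-equiv}(iii) and Step~5 of Theorem~\ref{thm:main-rigorous-enhanced} likewise only assert the uniqueness of the modification by appeal to ``the canonical nature of the constructions.'' Your term-induction argument supplies the missing content: because every element of $F(T)$ is the class of a closed term and $\theta_T$ is a homomorphism in $\mathbf{Mod}$, $\theta_T$ is determined on constants (including Henkin witnesses) and propagates through function symbols, so $\theta_T = \eta_T$ follows from the reachability of the term model — this is the standard ``at most one homomorphism out of a term-generated structure'' argument, and it is what the paper should have said. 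Two small caveats: your base case invokes naturality with respect to ``the canonical translation that introduces $c_\varphi$,'' which is more roundabout than necessary — if constants are treated as $0$-ary function symbols (the paper's Definition of model homomorphism omits constants entirely, so some convention must be fixed), preservation of $[c_\varphi]$ is immediate from $\theta_T$ being a $\mathbf{Mod}$-morphism; and your resolution of the 2-cell ambiguity by taking $\mathbf{Mod}$ locally discrete is a reasonable reading the paper never makes explicit, under which the uniqueness and invertibility of $\mu$ become trivial once $\theta = \eta$ is established. What your approach buys is an actual proof of the componentwise equality; what the paper's approach (such as it is) buys is only brevity.
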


\section*{Uniqueness of the Model Construction \texorpdfstring{\(G(T)\)}{}}

\begin{lemma}[Uniqueness of \(G(T)\)]
\label{lem:compactness-model}
If \(T\) is a first-order theory and \(G(T)\) is a model constructed by a compactness or saturation procedure (such as ultraproducts or via a saturation argument), then \(G(T)\) is unique up to isomorphism. That is, if \(M_1\) and \(M_2\) are two such models of \(T\), there exists an isomorphism
\[
\psi: M_1 \to M_2.
\]
\end{lemma}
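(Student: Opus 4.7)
The plan is to reduce the claim to the classical model-theoretic fact that any two elementarily equivalent \(\kappa\)-saturated models of the same cardinality \(\kappa\) are isomorphic. First I would observe that both \(M_1\) and \(M_2\) are models of \(T\), and by replacing \(T\) with the complete theory of either model (which is consistent and extends \(T\)) we may assume without loss of generality that \(T\) is complete, so that \(M_1 \equiv M_2\) in the shared language.

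Next I would fix a cardinal \(\kappa \geq |L_T| + \aleph_0\) at which the construction of \(G(T)\) is performed, and extract from the hypothesis that each \(M_i\) is \(\kappa\)-saturated with domain of cardinality \(\kappa\). For the ultrapower construction this follows from Łoś's theorem together with the use of \(\kappa\)-good ultrafilters on an index set of appropriate size; for the direct saturation construction it is the defining property of the output. The isomorphism \(\psi: M_1 \to M_2\) is then produced by the standard back-and-forth argument: one enumerates \(M_1\) and \(M_2\) in order type \(\kappa\), and at each stage one uses saturation on the target side to realize the complete type that the next unmatched source element determines over the partial isomorphism built so far. Elementary equivalence guarantees consistency of the type with the theory of the target, and the fact that the parameter set stays of cardinality less than \(\kappa\) keeps saturation applicable. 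The union of the partial isomorphisms is an elementary bijection, hence the desired \(\psi\).

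The main obstacle I anticipate is ensuring that the two admissible construction schemes, the ultrapower and the direct saturation, actually yield models of matching cardinality and matching saturation degree, since they are not a priori the same object. This typically requires explicit set-theoretic hypotheses such as the existence of \(\kappa\)-good ultrafilters (Keisler--Shelah), or cardinal-arithmetic assumptions such as \(\kappa^{<\kappa} = \kappa\) to guarantee saturated models of the desired size. I would therefore state the lemma with these standing assumptions made explicit, and note that without them the phrase ``unique up to isomorphism'' should be read relative to a fixed canonical construction scheme rather than across every possible implementation.
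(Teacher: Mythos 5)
The paper itself offers no proof of this lemma: it is stated in the appendix without argument, and the body of the paper simply defers to the literature for the uniqueness of saturated models. So your proposal supplies something the paper omits, and the route you choose --- reduce to the classical fact that two elementarily equivalent $\kappa$-saturated models of cardinality $\kappa$ are isomorphic, and build $\psi$ by a back-and-forth that realizes types on the target side --- is the standard and correct one. Your closing caveats about $\kappa$-good ultrafilters and $\kappa^{<\kappa}=\kappa$ are exactly the hypotheses the paper hides behind the phrase ``standard set-theoretic assumptions,'' and making them explicit is a genuine improvement on the source.

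There is, however, one step that fails as written: the reduction ``we may assume without loss of generality that $T$ is complete, so that $M_1 \equiv M_2$.'' If $T$ is incomplete, $\mathrm{Th}(M_1)$ and $\mathrm{Th}(M_2)$ may be two \emph{different} complete extensions of $T$ (take $T$ to be the theory of groups, $M_1$ a saturated abelian group and $M_2$ a saturated nonabelian one); then no isomorphism can exist, and replacing $T$ by ``the complete theory of either model'' is not a harmless relabelling --- it silently discards one of the two models from the hypothesis. Elementary equivalence is the real input your back-and-forth needs, and it must be secured before the argument starts rather than assumed. The repair consistent with the rest of the paper is to require both $M_1$ and $M_2$ to be models of the fixed maximal consistent Henkin extension $T^*$ (this is what the Canonical Representation Property in Theorem~\ref{thm:main-rigorous-enhanced} implicitly forces), or equivalently to build a canonical completion of $T$ into the definition of the construction scheme. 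Your final paragraph gestures at this by saying uniqueness should be read relative to a fixed scheme, but you locate the obstruction in matching cardinalities and saturation degrees, whereas the more basic obstruction is that the two outputs need not even satisfy the same sentences.
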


\section*{2-Categorical Homotopy Equivalence of \texorpdfstring{\(F\)}{} and \texorpdfstring{\(G\)}{}}

\begin{lemma}[2-Categorical Homotopy Equivalence]
Let \(F, G : \mathbf{Th} \to \mathbf{Mod}\) be functors as introduced in the main text, and let
\[
\eta : F \Rightarrow G
\]
be the canonical natural transformation. Since each component \(\eta_T\) is an isomorphism, \(F\) and \(G\) are equivalent in the 2-category of categories (i.e.\ homotopy equivalent in the 2-categorical sense). In particular, for any natural transformation \(\theta: F \Rightarrow G\), there is a unique modification \(\mu: \theta \Rrightarrow \eta\) such that all coherence diagrams commute.
\end{lemma}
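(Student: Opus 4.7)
The plan is to derive the claim from two already-established results: the componentwise invertibility of $\eta$ (Proposition~\ref{prop:eta-isomorphism}) and the Rigidity Theorem of Section~5.2. First, I would show that $\eta$ is a natural isomorphism in the 1-categorical functor category $\mathbf{Fun}(\mathbf{Th}, \mathbf{Mod})$. Each $\eta_T$ has a two-sided inverse $\eta_T^{-1}$ in $\mathbf{Mod}$, and a standard diagram chase shows that the family $\{\eta_T^{-1}\}_T$ is itself natural: for any theory translation $\phi: T \to T'$, pre- and post-composing the naturality square of $\eta$ with the appropriate inverses yields $F(\phi) \circ \eta_{T'}^{-1} = \eta_T^{-1} \circ G(\phi)$. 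This gives an explicit quasi-inverse to $\eta$, establishing 1-categorical isomorphism of $F$ and $G$; since every 1-isomorphism in $\mathbf{Fun}(\mathbf{Th}, \mathbf{Mod})$ is \emph{a fortiori} an equivalence in the enclosing 2-category, this covers the equivalence statement.

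For the second part, the existence and uniqueness of the modification $\mu: \theta \Rrightarrow \eta$, I would first pin down the 2-categorical framework by viewing $\mathbf{Mod}$ as a locally discrete 2-category (only identity 2-cells on each morphism), so that modifications in $\mathbf{Fun}(\mathbf{Th}, \mathbf{Mod})$ between natural transformations $\theta, \eta$ exist precisely when $\theta_T = \eta_T$ componentwise, and in that case the modification is uniquely the identity family. By the Rigidity Theorem, any $\theta: F \Rightarrow G$ satisfies $\theta_T = \eta_T$ for every $T$, so the candidate $\mu_T := \mathrm{id}_{\eta_T}$ is well-defined and uniquely determined. The coherence diagrams of a modification then collapse to the naturality squares of $\eta$, which commute by Theorem~\ref{thm:naturality_rigorous}.

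The main obstacle I anticipate is conceptual rather than computational: one must fix the 2-categorical setting precisely, since the paper oscillates between treating $\mathbf{Mod}$ as a 1-category and invoking higher 2-cells. If one wishes to allow genuinely nontrivial 2-cells in $\mathbf{Mod}$ (for instance by enriching over $\mathbf{Cat}$ via natural transformations of model homomorphisms), uniqueness of $\mu_T$ requires a further argument: namely, that the only invertible 2-cell between $\theta_T$ and $\eta_T$ compatible with the Canonical Representation Property of $G(T)$ (Lemma~\ref{lem:canonical-representation}) is forced to be the identity, because each $\eta_T$ sends an equivalence class $[t]$ to the semantically unique value $\llbracket t \rrbracket_{G(T)}$, leaving no room for a nontrivial automorphism. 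Either reading yields the claim, but the underlying convention should be fixed at the outset of the proof.
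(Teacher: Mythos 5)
Your proposal is correct, and it is in fact more explicit than what the paper itself provides: this lemma appears in the appendix without a written proof, and the closest argument in the main text (part (iii) of the proof of Theorem~\ref{thm:2-cat-equiv}) merely asserts that the unique invertible modification exists ``from the canonical nature of the constructions'' and that the 2-cell space $\mathrm{Hom}(\theta,\eta)$ is contractible, without constructing anything. Your route differs in two useful ways. First, for the equivalence claim you actually verify that the family $\{\eta_T^{-1}\}_T$ is natural by conjugating the naturality square of $\eta$, which the paper never does; this is the standard and correct way to pass from componentwise invertibility to a natural isomorphism, hence to an equivalence in $\mathbf{Fun}(\mathbf{Th},\mathbf{Mod})$. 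Second, for the modification claim you reduce everything to the Rigidity Theorem of Section~5.2 (which gives the strict equality $\theta_T=\eta_T$ for all $T$) and then observe that in a locally discrete hom-structure the only modification between equal transformations is the identity family, whose coherence squares collapse to the naturality of $\eta$. This is a cleaner decomposition than the paper's, and your closing caveat is well taken: the paper never fixes whether $\mathbf{Mod}$ carries nontrivial 2-cells, and the very meaning of ``unique modification'' depends on that choice. The one thing to be aware of is that your argument inherits whatever weakness the Rigidity Theorem has --- its proof in Section~5.2 is itself an assertion that naturality at $\phi=\mathrm{id}_T$ ``forces'' $\theta_T$ to agree with $\eta_T$, which does not follow from naturality alone --- but as a derivation of the present lemma from the paper's previously stated results, your proof is sound and takes essentially the route the paper should have taken.
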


\section*{Detailed Statements and Supplementary Materials}

\begin{definition}[Henkin Construction]
\label{def:henkin-construction}
A \emph{Henkin Construction} for a theory \(T\) extends \(T\) to a maximally consistent theory \(T^*\) by adding witness constants for every existential formula. Let \(\mathrm{Term}(T^*)\) be the set of terms in the expanded language, and define
\[
t \sim_T s 
\;\Longleftrightarrow\; 
T^* \vdash t = s.
\]
Then the term model is given by
\[
F(T) \;=\; \mathrm{Term}(T^*) / \sim_T.
\]
\end{definition}

\begin{definition}[Compactness/Saturation Construction]
\label{def:compactness-construction}
A \emph{Compactness/Saturation Construction} uses methods such as the compactness theorem, saturation arguments, or ultrapowers to build a model of a first-order theory \(T\). It ensures that every existential statement is realized, yielding a model
\[
G(T),
\]
unique up to isomorphism under standard assumptions.
\end{definition}

\begin{lemma}[Equivalence Relation]
\label{lem:equiv-relation}
Let \(T^*\) be a maximally consistent extension of \(T\). Define
\[
t \sim_T s 
\;\Longleftrightarrow\; 
T^* \vdash t = s
\]
on \(\mathrm{Term}(T^*)\). Then \(\sim_T\) is an equivalence relation.
\end{lemma}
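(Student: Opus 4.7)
The plan is to verify directly that $\sim_T$ satisfies the three defining properties of an equivalence relation, each of which will follow from a standard axiom or derived rule for the equality symbol in first-order logic, combined with the fact that $T^* \supseteq T$ contains (at least implicitly, via the underlying logic) all such derivations. Since $\sim_T$ is defined purely in terms of provability in $T^*$, and $T^*$ is an extension of first-order logic with equality, all equality rules are available inside $T^*$.

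I would carry out the three verifications in the following order. First, for reflexivity, I would invoke the logical axiom $\forall x\,(x = x)$, instantiate with the (closed) term $t$, and conclude $T^* \vdash t = t$, so that $t \sim_T t$ for every $t \in \mathrm{Term}(T^*)$. Second, for symmetry, I would assume $t \sim_T s$, so $T^* \vdash t = s$, and then apply the standard derived rule of symmetry of equality (which itself follows from the reflexivity axiom $t = t$ together with substitution of equals in the formula $x = t$) to deduce $T^* \vdash s = t$, giving $s \sim_T t$. Third, for transitivity, I would assume $T^* \vdash t = s$ and $T^* \vdash s = u$, then apply the substitution axiom for equality (replacing $s$ by $u$ inside the provable formula $t = s$) to obtain $T^* \vdash t = u$, hence $t \sim_T u$.

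Since each step uses only the equality axioms of first-order logic and the transitivity of the derivability relation $\vdash$, no use of the maximal consistency of $T^*$ or of the Henkin witnesses is required; the hypotheses of the lemma are therefore stronger than strictly necessary, but they cause no difficulty. Honestly, there is no real obstacle here: the only point worth noting is the need to make explicit that the underlying logical framework is first-order logic \emph{with equality}, so that the reflexivity, symmetry, and substitution schemes for $=$ are part of the deductive apparatus used to define $T^* \vdash$. Once this is declared, the three verifications are immediate and the lemma is established.
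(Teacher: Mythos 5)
Your proof is correct: reflexivity, symmetry, and transitivity of $\sim_T$ follow from the equality axioms of first-order logic (instantiation of $\forall x\,(x=x)$ at closed terms, the derived symmetry rule, and substitution of equals) together with closure of $T^* \vdash$ under these rules, exactly as you describe, and your remark that neither maximal consistency nor the Henkin witnesses are actually needed is also accurate. For comparison, the paper merely states this lemma in its appendix without supplying any proof, so there is nothing to contrast your argument with; yours is the standard and essentially unique verification.
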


\begin{proposition}
\label{prop:eta-isomorphism}
Let \(F\) and \(G\) be the functors obtained via the Henkin Construction and the Compactness/Saturation Construction, respectively. Under appropriate consistency and rigidity assumptions, the canonical natural transformation
\[
\eta_T : F(T) \to G(T), 
\quad 
\eta_T([t]) = \llbracket t \rrbracket_{G(T)},
\]
is an isomorphism.
\end{proposition}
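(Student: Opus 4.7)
The plan is to reduce the statement to the results already in hand. By Proposition~\ref{prop:well_defined}, the assignment $[t] \mapsto \llbracket t \rrbracket_{G(T)}$ descends to a well-defined map $\eta_T: F(T) \to G(T)$; by Lemma~\ref{lem:preservation_final}, this map preserves the algebraic operations induced by the signature of $T$, so it is a model homomorphism. It therefore suffices to produce a two-sided inverse $\eta_T^{-1}$ and check that it is also a homomorphism, from which being an isomorphism in $\mathbf{Mod}$ follows.

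For injectivity, suppose $\eta_T([t]) = \eta_T([s])$, that is, $\llbracket t \rrbracket_{G(T)} = \llbracket s \rrbracket_{G(T)}$. Because $T^*$ is maximally consistent (Definition~\ref{def:henkin-construction}), either $T^* \vdash t = s$ or $T^* \vdash \neg(t = s)$. In the second case, the soundness of $G(T)$ as a model of $T^*$ would give $G(T) \models \neg(t = s)$, contradicting the equality of interpretations. Hence $T^* \vdash t = s$, i.e.\ $[t] = [s]$ in $F(T)$.

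For surjectivity, I would invoke the Canonical Representation Property (Lemma~\ref{lem:canonical-representation}): every $y \in G(T)$ is of the form $\llbracket t \rrbracket_{G(T)}$ for some closed term $t \in \mathrm{Term}(T^*)$. Set $\eta_T^{-1}(y) := [t]$; this is well-defined by the injectivity argument above, and the two composites $\eta_T \circ \eta_T^{-1}$ and $\eta_T^{-1} \circ \eta_T$ are the respective identities by direct inspection. That $\eta_T^{-1}$ is itself a homomorphism is a formal consequence: for any $n$-ary symbol $\sigma$ and elements $y_i = \llbracket t_i \rrbracket_{G(T)}$, compositionality of the interpretation gives $\sigma^{G(T)}(y_1, \dots, y_n) = \llbracket \sigma(t_1, \dots, t_n) \rrbracket_{G(T)}$, whence $\eta_T^{-1}$ commutes with $\sigma$. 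Combined with the naturality established in Theorem~\ref{thm:naturality_rigorous}, this promotes $\eta$ to a natural isomorphism.

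The main obstacle is the Canonical Representation Property itself, which is a genuine constraint on the compactness/saturation construction rather than a free-standing consequence. A generic ultrapower on an uncountable index set can contain elements that are not interpretations of any closed term of $T^*$, so the property fails in full generality. I would handle this either by selecting the prime (term-generated) submodel of a sufficiently saturated model of $T^*$, or by appealing to the uniqueness-up-to-isomorphism clause in the definition of $G(T)$ to normalize $G(T)$ to the $\mathrm{Term}(T^*)$-generated substructure; under the rigidity assumptions in force, both choices agree and validate the hypothesis of Lemma~\ref{lem:canonical-representation}, completing the proof.
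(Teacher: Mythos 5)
Your proposal follows essentially the same route as the paper's own argument (given as Steps 1, 2 and 4 of the proof of Theorem~\ref{thm:main-rigorous-enhanced}): well-definedness via Proposition~\ref{prop:well_defined}, the homomorphism property via Lemma~\ref{lem:preservation_final}, and surjectivity via the Canonical Representation Property of Lemma~\ref{lem:canonical-representation}. Where you genuinely diverge, and improve on the paper, is injectivity: the paper's Step~4 claims that $\llbracket t\rrbracket_{G(T)}=\llbracket s\rrbracket_{G(T)}$ implies $t\sim_T s$ ``by Step 1,'' but Step 1 only establishes the converse (soundness) direction; the injectivity half is in fact smuggled into the statement of the Canonical Representation Property as an extra hypothesis. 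You instead derive it honestly from the completeness of the maximal consistent extension $T^*$ together with the assumption that $G(T)\models T^*$ (if $T^*\not\vdash t=s$ then $T^*\vdash\neg(t=s)$, so the interpretations differ), which is the standard Henkin-style argument and closes a real gap in the paper's reasoning. Your closing observation that the surjectivity hypothesis fails for a generic ultrapower and must be secured by passing to the term-generated substructure (or by normalizing $G(T)$ up to isomorphism) is likewise a caveat the paper asserts but does not justify; making it explicit is the right call.
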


\section*{Technical Lemmas and Propositions}

\begin{lemma}[Well-Definedness of \(\eta_T\)]
\label{lem:eta_well_defined}
For any theory \(T\) and any terms \(t, s\) such that \(t \sim_T s\), 
\[
\llbracket t \rrbracket_{G(T)} = \llbracket s \rrbracket_{G(T)}.
\]
Hence,
\[
\eta_T([t]) := \llbracket t \rrbracket_{G(T)}
\]
is well-defined.
\end{lemma}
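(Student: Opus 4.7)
The plan is to reduce the well-definedness of $\eta_T$ to the soundness of first-order logic, applied in the Henkin-expanded language $\mathcal{L}^*$. Concretely, given terms $t, s \in \mathrm{Term}(T^*)$ with $t \sim_T s$, I will unpack the definition of $\sim_T$ to obtain $T^* \vdash t = s$, invoke soundness to conclude that every model of $T^*$ satisfies the sentence $t = s$, and then appeal to the Tarski semantics of equality to deduce that $\llbracket t \rrbracket_{G(T)} = \llbracket s \rrbracket_{G(T)}$ as elements of the underlying set of $G(T)$.

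First I would make explicit that $G(T)$, though initially constructed as a model of $T$, must be regarded as an $\mathcal{L}^*$-structure in order for the notation $\llbracket t \rrbracket_{G(T)}$ to make sense on terms containing Henkin constants. This is justified by the construction of $G$ in Section~3.2 together with the implicit assumption (used, for instance, in the proof of Proposition~\ref{prop:well_defined}) that $G(T)$ in fact models the full maximal consistent Henkin extension $T^*$: for each existential formula $\exists x\,\varphi(x)$ provable in $T$, an element of $G(T)$ satisfying $\varphi$ exists because $G(T) \models T$, and we declare it to be the interpretation of $c_\varphi$. The Canonical Representation Property (Lemma~\ref{lem:canonical-representation}) and the uniqueness of $G(T)$ up to isomorphism (Lemma~\ref{lem:compactness-model}) ensure that this expanded interpretation is coherent and independent of choices up to the ambient isomorphism.

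With this in hand, from $T^* \vdash t = s$ the soundness theorem yields $G(T) \models t = s$, which by definition of the interpretation of the equality symbol in a first-order structure is equivalent to $\llbracket t \rrbracket_{G(T)} = \llbracket s \rrbracket_{G(T)}$. Since the argument is symmetric in the choice of representative for the equivalence class $[t] \in F(T) = \mathrm{Term}(T^*)/\sim_T$, the assignment $\eta_T([t]) := \llbracket t \rrbracket_{G(T)}$ depends only on $[t]$, establishing the lemma.

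The main obstacle is precisely the interpretive step of lifting $G(T)$ from an $\mathcal{L}$-model of $T$ to an $\mathcal{L}^*$-model of $T^*$; any ambiguity in how the Henkin constants $c_\varphi$ are interpreted would immediately undermine the well-definedness statement. Once this canonical expansion is secured via the hypotheses on $G$ collected earlier in the paper, the remaining chain of implications (provable equality in $T^*$ $\Rightarrow$ semantic validity in $G(T)$ $\Rightarrow$ equality of interpretations) is a direct and routine invocation of soundness and the standard recursive definition of term evaluation, mirroring the argument already given for Proposition~\ref{prop:well_defined}.
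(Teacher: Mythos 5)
Your proof follows essentially the same route as the paper's argument for Proposition~\ref{prop:well_defined}: unpack $t \sim_T s$ to $T^* \vdash t = s$, invoke soundness to get $G(T) \models t = s$, and conclude $\llbracket t \rrbracket_{G(T)} = \llbracket s \rrbracket_{G(T)}$ by the semantics of equality. You are in fact somewhat more careful than the paper, which silently assumes $G(T)$ is an $\mathcal{L}^*$-structure satisfying $T^*$; your explicit justification of that expansion via the Henkin witnesses and the Canonical Representation Property addresses the one point the paper's own proof glosses over.
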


\begin{proposition}[Compatibility with the Evaluation Map]
\label{prop:evaluation_compatibility}
For any \(f, x \in F(T)\),
\[
\eta_T\bigl(\mathrm{ev}_{F(T)}(f,x)\bigr) 
\;=\; 
\mathrm{ev}_{G(T)}\bigl(\eta_T(f), \eta_T(x)\bigr).
\]
Thus, \(\eta_T\) commutes with evaluation.
\end{proposition}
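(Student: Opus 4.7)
The plan is to reduce the statement to Lemma~\ref{lem:preservation_final} in Section~4.3, which already establishes exactly the preservation of evaluation maps under $\eta_T$. The essential point is that every element of $F(T) = \mathrm{Term}(T^*)/\sim_T$ is, by the very construction of the term model, an equivalence class of a closed term, so representability by terms is automatic on the source side (no Canonical Representation Property is needed here).

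First, I would pick arbitrary $f, x \in F(T)$ and select representatives $t_f, t_x \in \mathrm{Term}(T^*)$ with $f = [t_f]$ and $x = [t_x]$. Then, by the definition of the syntactic evaluation map on $F(T)$, one has
\[
\mathrm{ev}_{F(T)}(f,x) = [\mathrm{ev}(t_f, t_x)],
\]
where $\mathrm{ev}(t_f, t_x)$ denotes the formal application term. Applying the definition $\eta_T([u]) := \llbracket u \rrbracket_{G(T)}$ yields
\[
\eta_T\bigl(\mathrm{ev}_{F(T)}(f,x)\bigr) = \llbracket \mathrm{ev}(t_f, t_x) \rrbracket_{G(T)}.
\]

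Second, I would invoke the compositionality (homomorphic behavior) of the semantic interpretation function $\llbracket \cdot \rrbracket_{G(T)}$, exactly as assumed in Section~4.3: for every $n$-ary operation, the interpretation of a compound term equals the operation in $G(T)$ applied to the interpretations of its subterms. Specialized to the application operation, this gives
\[
\llbracket \mathrm{ev}(t_f, t_x) \rrbracket_{G(T)} = \mathrm{ev}_{G(T)}\bigl(\llbracket t_f \rrbracket_{G(T)}, \llbracket t_x \rrbracket_{G(T)}\bigr) = \mathrm{ev}_{G(T)}\bigl(\eta_T(f), \eta_T(x)\bigr),
\]
which is the desired identity. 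Independence from the choice of representatives $t_f, t_x$ follows from Lemma~\ref{lem:eta_well_defined}.

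The main obstacle, if any, is the status of the compositionality assumption for the application map $\mathrm{ev}_{G(T)}$. In a purely first-order signature this is automatic for ordinary function symbols, but to treat $\mathrm{ev}$ as an operation on $G(T) \times G(T) \to G(T)$ one must verify that $G(T)$ carries such an application structure compatibly with the syntactic one in $F(T)$; this is precisely the hypothesis inherited from Lemma~\ref{lem:preservation_final} (and ultimately grounded in the Canonical Representation Property of $G(T)$). Assuming this, the proposition reduces to a two-line chase of definitions.
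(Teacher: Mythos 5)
Your proof is correct and follows essentially the same route as the paper's own proof of Lemma~\ref{lem:preservation_final}: choose term representatives \(t_f, t_x\), unfold the syntactic evaluation as \([\mathrm{ev}(t_f,t_x)]\), and apply the assumed compositionality of \(\llbracket \cdot \rrbracket_{G(T)}\). (A minor quibble: that compositionality hypothesis is the paper's standing Assumption~3.2 on the interpretation function, not something grounded in the Canonical Representation Property, but this does not affect the argument.)
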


\begin{lemma}[Naturality]
\label{lem:eta_naturality}
For any theory translation \(\phi: T \to T'\),
\[
\eta_{T'} \circ F(\phi) = G(\phi) \circ \eta_T.
\]
\end{lemma}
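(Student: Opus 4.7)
The plan is to verify naturality by a direct element-chase, exploiting the fact that $F(\phi)$ is defined on representatives by $[t] \mapsto [\phi(t)]$ and $G(\phi)$ is defined precisely so that $G(\phi)\bigl(\llbracket t\rrbracket_{G(T)}\bigr) = \llbracket \phi(t)\rrbracket_{G(T')}$. Concretely, I would fix an arbitrary class $[t] \in F(T)$ with representative $t \in \mathrm{Term}(T^*)$, then compute the two sides of the diagram separately and observe that both land at $\llbracket \phi(t)\rrbracket_{G(T')}$.

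The left-hand side unfolds as $\eta_{T'}\bigl(F(\phi)([t])\bigr) = \eta_{T'}([\phi(t)]) = \llbracket \phi(t)\rrbracket_{G(T')}$, using the definition of $F(\phi)$ given in Section~3.1 followed by the defining formula of $\eta_{T'}$. The right-hand side unfolds as $G(\phi)\bigl(\eta_T([t])\bigr) = G(\phi)\bigl(\llbracket t\rrbracket_{G(T)}\bigr) = \llbracket \phi(t)\rrbracket_{G(T')}$, using the defining formula of $\eta_T$ and then the defining clause of $G(\phi)$ from Section~3.2. Since both expressions equal $\llbracket \phi(t)\rrbracket_{G(T')}$, the naturality square commutes on representatives, and because $\eta_T$ and $F(\phi)$ are already known to descend to well-defined maps on equivalence classes (Lemma~\ref{lem:eta_well_defined} and the well-definedness clause in Section~3.1), the equality holds on all of $F(T)$.

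The only genuine subtlety is ensuring that $G(\phi)$ is actually well-defined as a map on the whole of $G(T)$, not merely on the image of $\eta_T$. Here I would invoke the Canonical Representation Property (Lemma~\ref{lem:canonical-representation}): every element of $G(T)$ is of the form $\llbracket t\rrbracket_{G(T)}$ for some term $t$, and two terms with the same interpretation are provably equal in $T^*$, so $G(\phi)$ is independent of the chosen representative. This, combined with the fact that a theory translation preserves provability (so $T^*\vdash t=s$ implies $T'^*\vdash \phi(t)=\phi(s)$), closes the argument.

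I do not expect a major obstacle here, as the entire content of naturality has essentially been front-loaded into the definitions of $F(\phi)$ and $G(\phi)$. The only place where care is needed is bookkeeping around the Henkin expansions: the translation $\phi:T\to T'$ must be extended to an induced translation $\phi^*:T^*\to T'^*$ sending each Henkin constant $c_\varphi$ to $c_{\phi(\varphi)}$ (or an equivalent canonical choice), and one should briefly remark that this extension is itself canonical and preserves provability, so that the symbol $\phi(t)$ appearing above is unambiguous. Once this is noted, the proof reduces to the two-line computation outlined above.
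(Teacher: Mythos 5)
Your proposal is correct and follows essentially the same route as the paper's own argument (Theorem~\ref{thm:naturality_rigorous}): unfold both composites on a representative term and observe that each equals \(\llbracket \phi(t)\rrbracket_{G(T')}\) by the defining clauses of \(F(\phi)\), \(G(\phi)\), and \(\eta\). Your additional remarks on the well-definedness of \(G(\phi)\) via the Canonical Representation Property and on the canonical extension of \(\phi\) to the Henkin expansions are sensible refinements of points the paper leaves implicit, but they do not change the argument.
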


\begin{proposition}[Rigidity of the Natural Transformation]
\label{prop:eta_rigidity}
The natural transformation \(\eta: F \Rightarrow G\) is rigid. In other words, for any natural transformation \(\theta: F \Rightarrow G\), we have \(\theta = \eta\).
\end{proposition}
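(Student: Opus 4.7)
My plan is to show $\theta = \eta$ by reducing to a statement about generators of the term model and then using structural induction. First, I would fix $T \in \mathbf{Th}$ and note that every element of $F(T) = \mathrm{Term}(T^*)/\sim_T$ has the form $[t]$ for some closed $\mathcal{L}^*$-term $t$. Since $\theta_T$ and $\eta_T$ are morphisms in $\mathbf{Mod}$, both preserve the interpretation of every $n$-ary function symbol $f$ of the expanded signature; combined with Proposition \ref{prop:evaluation_compatibility}, this gives
\[
\theta_T([f(t_1,\dots,t_n)]) = f^{G(T)}\bigl(\theta_T([t_1]),\dots,\theta_T([t_n])\bigr),
\]
and analogously for $\eta_T$. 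Hence an induction on term complexity reduces the equality $\theta_T = \eta_T$ to proving $\theta_T([c]) = \eta_T([c])$ for every constant symbol $c$ of the expanded language $\mathcal{L}^*$.

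Next, I would handle the two kinds of constants separately. For an original constant $c$ in the signature of $T$, both values are forced to equal $c^{G(T)}$ by the definition of model homomorphism, and by construction of $\eta$ this is $\llbracket c \rrbracket_{G(T)} = \eta_T([c])$. For a Henkin constant $c_\varphi$, I would use the defining Henkin axiom $T^* \vdash \varphi(c_\varphi)$ to conclude that $G(T)$ satisfies $\varphi$ at both $\theta_T([c_\varphi])$ and $\eta_T([c_\varphi])$, and then invoke Lemma \ref{lem:canonical-representation} to identify these elements with the unique canonical witness built into the construction of $G(T)$. The fact that the Henkin expansion is performed via a fixed canonical choice function, as stipulated in Theorem \ref{thm:main-rigorous-enhanced}(i), guarantees that this canonical witness coincides with $\llbracket c_\varphi \rrbracket_{G(T)}$, and naturality of $\theta$ with respect to inclusions of sub-theories of $T$ into $T$ itself propagates this identification uniformly.

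The hard part will be the Henkin-constant step. In general $G(T)$ can contain many elements satisfying a given existential formula $\varphi$, so merely knowing that $\theta_T([c_\varphi])$ is a witness of $\varphi$ is not enough to pin it down, and a model homomorphism of the $\Sigma$-reduct is a priori free to permute witnesses. Overcoming this requires making explicit the canonicity already assumed in the construction of $G(T)$: I would promote $G(T)$ to a $\mathcal{L}^*$-structure in which each Henkin constant $c_\varphi$ is interpreted by the canonically designated witness supplied by Lemma \ref{lem:canonical-representation}, and interpret ``morphism in $\mathbf{Mod}$'' so as to preserve these nullary operations. Once this convention is in place, preservation of constants by $\theta_T$ forces agreement with $\eta_T$ on every Henkin constant, and the inductive step of the first paragraph then yields $\theta_T = \eta_T$ componentwise, hence $\theta = \eta$.
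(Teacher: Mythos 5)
Your proof is essentially correct, but it follows a genuinely different route from the paper's. The paper argues by contradiction: it assumes $\theta_T([t]) \neq \eta_T([t])$ for some $[t]$, asserts that $\llbracket t \rrbracket_{G(T)}$ is ``the only possible semantic value consistent with $T$,'' and then invokes naturality at $\phi = \mathrm{id}_T$ to conclude that $\theta_T$ ``must preserve the canonical interpretation'' --- a step that is really an appeal to canonicity rather than a derivation, since the naturality square at the identity is vacuous. You instead give a generators-and-relations argument: $F(T)$ is generated by the closed terms, homomorphisms in $\mathbf{Mod}$ preserve function symbols, so by induction on term complexity everything reduces to agreement on constants, and the only nontrivial case is the Henkin constants. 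Your diagnosis of that case is exactly right --- a $\Sigma$-reduct homomorphism is a priori free to move $[c_\varphi]$ to any witness of $\varphi$ in $G(T)$, so the proposition as literally stated is not provable without an additional convention --- and your fix (regard $G(T)$ as an $\mathcal{L}^*$-structure with the canonically designated witnesses as the interpretations of the $c_\varphi$, and take $\mathbf{Mod}$-morphisms to preserve these nullary operations) is consistent with the paper's setup, since $\eta_T([c_\varphi]) = \llbracket c_\varphi \rrbracket_{G(T)}$ already presupposes that $G(T)$ interprets the Henkin constants. What your approach buys is a proof that actually locates where the rigidity comes from (preservation of a generating set of constants) rather than asserting it; what it costs is that, once the convention is in place, the statement becomes the near-trivial uniqueness of homomorphisms out of a structure generated by its constants, and the intermediate appeal to Lemma~\ref{lem:canonical-representation} and to naturality along sub-theory inclusions in your second paragraph becomes superfluous --- you could delete it and go straight to the argument of your final paragraph.
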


\section*{Concrete Examples}

\begin{example}[Kripke Semantics for Modal Logics]
\label{ex:kripke-modal}
For a modal theory \(T\) interpreted over a Kripke frame \(\mathcal{F}=(W,R)\), the model \(G(T)\) assigns truth values to modal formulas at each possible world \(w \in W\). Meanwhile, the Henkin construction \(F(T)\) introduces additional constants to witness each modal operator. The natural transformation
\[
\eta_T([t]) := \llbracket t \rrbracket_{G(T)}
\]
ensures that if two terms are equivalent under \(\sim_T\), then their Kripke interpretations coincide.
\end{example}

\begin{example}[Henkin Construction for Arithmetic Theories]
\label{ex:henkin-arithmetic}
For a consistent arithmetic theory \(T\) (e.g.\ Peano Arithmetic), the Henkin construction produces a maximally consistent extension \(T^*\) by introducing new constants for each existential formula. The model \(F(T)\) is the quotient 
\[
\mathrm{Term}(T^*) / \sim_T,
\]
while \(G(T)\) is obtained via saturation or ultrapower arguments. The map
\[
\eta_T([t]) := \llbracket t \rrbracket_{G(T)}
\]
provides an isomorphism between the syntactic term model and the semantic model.
\end{example}

\section*{Supplementary Remarks}

\begin{remark}[Quasi-Categories]
\label{rem:quasicat}
A \emph{quasi-category} is a simplicial set satisfying the weak Kan condition, serving as a model for \((\infty,1)\)-categories. While the present work focuses on 2-categorical structures, the quasi-categorical framework can capture higher homotopical data and more general notions of equivalence.
\end{remark}

\begin{remark}[Interplay Between Syntax and Semantics]
\label{rem:syntax-semantics}
The Henkin construction (syntactic) and the compactness/saturation construction (semantic) are closely intertwined in first-order logic. The natural transformation \(\eta\) reflects this interplay by ensuring a categorical equivalence between syntactic and semantic models in a precise manner. 
\end{remark}

\section{Models of ZF}
\label{sec:zf-models}

In this section, we provide a detailed exposition of the construction of a model of Zermelo-Fraenkel Set Theory (ZF) via compactness and saturation techniques. The construction follows these key steps:

\begin{enumerate}[leftmargin=2em]
    \item \textbf{Consistency and Compactness:}  
    We start with the assumption that ZF is consistent. By the Compactness Theorem, every finitely satisfiable set of ZF sentences is satisfiable. This guarantees the existence of models that satisfy all finite subsets of ZF.
    
    \item \textbf{Ultraproduct/Saturation Method:}  
    One standard approach to construct a model of ZF is via the ultrapower construction. Alternatively, one may obtain a $\kappa$-saturated model through a saturation argument, where $\kappa$ is a sufficiently large cardinal. Both methods yield a model, denoted by $G(\mathrm{ZF})$, which satisfies every axiom of ZF.
    
    \item \textbf{Uniqueness up to Isomorphism:}  
    Under standard set-theoretic assumptions (including the Axiom of Choice), the model $G(\mathrm{ZF})$ is unique up to isomorphism. That is, any two models constructed by these methods are isomorphic.
    
    \item \textbf{Canonical Representation:}  
    In the context of our unified framework, $G(\mathrm{ZF})$ is assumed to satisfy the Canonical Representation Property. This means that every element in the domain of $G(\mathrm{ZF})$ is the semantic interpretation of some term in the extended language of ZF. This property is crucial for establishing the natural isomorphism between the syntactic and semantic models.
\end{enumerate}

The construction detailed above provides the necessary and sufficient framework for obtaining a model of ZF, which plays a vital role in illustrating the correspondence between syntactic constructions (via the Henkin method) and semantic models (via compactness and saturation).

\bibliographystyle{plain}
\bibliography{references}

\end{document}